\documentclass[a4paper,11pt]{amsart}

\newtheorem{thm}{Theorem}[section]
\newtheorem{cor}[thm]{Corollary}
\newtheorem{lem}[thm]{Lemma}

\newtheorem{prop}[thm]{Proposition}

\newtheorem{rem}[thm]{Remark}

\usepackage{amsmath,amsfonts,amssymb,amsthm,latexsym,bbm,color,hyperref}

\newcommand{\M}{\mathcal M}

\newcommand{\E}{\mathbb E}
\newcommand{\proba}{\mathbb P}

\newcommand{\Sph}{\mathbb S}

\newcommand{\one}{\mathbbm 1}

\newcommand{\R}{\mathbb R}
\newcommand{\Z}{\mathbb Z}

\newcommand{\C}{\mathbb{C}}
\newcommand{\Hq}{\mathbb{H}}

\newcommand{\iq}{\mathbf{i}}
\newcommand{\jq}{\mathbf{j}}
\newcommand{\kq}{\mathbf{k}}

\newcommand{\be}{\begin{equation}}
\newcommand{\ee}{\end{equation}}
\newcommand{\st}{such that }
\newcommand{\beq}{\begin{eqnarray*}}
\newcommand{\eeq}{\end{eqnarray*}}

\newcommand{\ste}{\, ;\, }

\newcommand{\Sp}{\mathbb{S}}

\newcommand{\f}{\frac}
\newcommand{\ff}{\frac{1}}

\newcommand{\pro}{probability }

\newcommand{\ie}{{\it i.e. }}

\def\restriction#1#2{\mathchoice
              {\setbox1\hbox{${\displaystyle #1}_{\scriptstyle #2}$}
              \restrictionaux{#1}{#2}}
              {\setbox1\hbox{${\textstyle #1}_{\scriptstyle #2}$}
              \restrictionaux{#1}{#2}}
              {\setbox1\hbox{${\scriptstyle #1}_{\scriptscriptstyle #2}$}
              \restrictionaux{#1}{#2}}
              {\setbox1\hbox{${\scriptscriptstyle #1}_{\scriptscriptstyle #2}$}
              \restrictionaux{#1}{#2}}}
\def\restrictionaux#1#2{{#1\,\smash{\vrule height .8\ht1 depth .85\dp1}}_{\,#2}}

\title[Gaussian quaternionic matrices]{Random right eigenvalues of Gaussian   quaternionic matrices}

\author[Florent Benaych-Georges]{Florent Benaych-Georges}\address{Florent Benaych-Georges, LPMA,  UPMC Univ Paris 6, Case courier 188, 4, Place Jussieu, 75252 Paris Cedex 05, France and CMAP, \'Ecole Polytechnique, route de Saclay, 91128 Palaiseau Cedex, France.}  \email{florent.benaych@gmail.com}

\author[Fran\c cois Chapon]{Fran\c cois Chapon} 
\address{Fran\c cois Chapon, T\'el\'ecom ParisTech, 46 rue Barrault, 75013 Paris, France.}
\email{francois.chapon@upmc.fr}

\keywords{Random matrices, quaternions}
\subjclass[2000]{15A52, 60B15} 
\date{\today}

\thanks{This work was partially supported by the \emph{Agence Nationale de la Recherche} grant ANR-08-BLAN-0311-03.}
\begin{document}

\maketitle

\begin{abstract}
We consider a random matrix whose entries are independent   Gaussian variables taking values in the field of quaternions with variance  $1/n$. Using logarithmic potential theory, we prove the almost sure convergence, as the dimension $n$ goes to infinity, of the empirical distribution of the right eigenvalues  towards some measure supported on the unit ball of the quaternions field. Some comments on more general Gaussian quaternionic random matrix models are also made.
\end{abstract}


\section*{Introduction}

Our motivation for studying quaternionic random matrices comes from the following facts. The projection onto the  complex plane of the uniform measure on the unit sphere $\Sph^3$ of $\R^4$ is the uniform measure on the unit disk $D(0,1)$ of $\C$, also called the circular law. Furthermore,  the projection onto the real axis of the uniform measure on $D(0,1)$ is the semi-circular law on $[-1,1]$. 
The last two measures play a key role in random matrix theory. Indeed, it is well known since Wigner's paper \cite{W58} that 
 as the dimension goes to infinity, the empirical distribution of the eigenvalues of a Gaussian Hermitian random matrix converges to the semi-circular law,   and it has also been proved that the empirical distribution of the   eigenvalues of a complex Gaussian random matrix converges to the circular law (see e.g. the book of Mehta \cite{mehta} or the paper of Tao, Vu and Krishnapur \cite{tvk10}).  Our initial idea, due to Philippe Biane, was   to
 find a random matrix model such that the empirical spectral measure would converge to the uniform measure on the unit sphere $\Sph^3$, and thus, in view of the previous observations, to study quaternionic random matrices, the unit sphere of quaternions being naturally  identified with the unit sphere $\mathbb S^3$. 
 The hope was then, after defining properly the eigenvalues of quaternionic random matrices, that the empirical spectral  distribution of a quaternionic Gaussian matrix will converge to the uniform distribution on the unit sphere of quaternions.   
 We will see that in fact, this is not the case, and we will prove a convergence result for the empirical spectral distribution towards some measure supported by the unit ball of the quaternions field.

The paper is organized as follows. In Section \ref{basicfactsquaternions}, we recall  some basic facts on the quaternions field $\Hq$ and on matrices of quaternions. 
Note that due to the noncommutativity of $\Hq$, two notions of eigenvalues cohabit,
 the left and the right ones, and we will only consider here the right eigenvalues.
We will see that we can associate to a quaternionic matrix $X(n)$ a complex matrix whose dimension is doubled which enables  to study   the complex right eigenvalues of $X(n)$.     
In Section \ref{convergence complex distribution}, we state the main results of this paper  about the convergence of the empirical distribution of right eigenvalues of quaternionic Gaussian random matrices.
The last sections and the appendix are devoted to the proofs of the results and some needed computations. 
\\

\noindent{\bf Acknowledgements. }Second author would like to thank Philippe Biane, L\'eonard Gallardo, Emmanuel Lesigne, Thierry L\'evy and Alain Rouault  for some useful comments.

\section{Basic facts on quaternionic matrices} \label{basicfactsquaternions}
\subsection{Quaternions}We begin with a brief recall of basic facts on quaternions (see \cite{z97}). Let us denote by $\Hq$ the noncommutative field of quaternions: as a real linear space, $\Hq$ admits a basis denoted by $(1, \iq,\jq,\kq)$ and its multiplicative structure is defined by the fact that $1$ is the neutral element, $\iq^2=\jq^2=\kq^2=-1$, $\iq\jq=-\jq\iq=\kq$, 
$\jq\kq=-\kq\jq=\iq$, $\kq\iq=-\iq\kq=\jq$. For all $q=q_0+q_1\iq+q_2\jq+q_3\kq\in \Hq$, one defines $q^*= q_0-q_1\iq-q_2\jq-q_3\kq$, $\Re(q)=q_0, \Im(q)=q_1\iq+q_2\jq+q_3\kq$. 
Then one has $qq^*=q_0^2+q_1^2+q_2^3+q_3^2$ (which proves that any non null quaternion is invertible) and we  put $|q|:=\sqrt{qq^*}$. It can be proved that for any $q,q'\in \Hq$, $|qq'|=|q||q'|$.
Note that identifying $1$ and $\iq$ with their usual definitions, one has $\R\subset \C\subset \Hq$ (an inclusion which is compatible with  the algebraic operations).

Two quaternions $x,y$ are said to be \emph{similar} if there exists a nonzero quaternion $q$ such that $x=qyq^{-1}$. Let $\Sph(\Hq)$ denote the group of quaternions with norm one.  Note that $x,y$ are said to be \emph{similar} if and only if for  a certain $u\in \Sph(\Hq)$, $x=uyu^*$.
The following lemma will be important for the study of right eigenvalues of quaternionic matrices.
\begin{lem}[Lemma 2.1 of  \cite{z97}]\label{lem_22811}
If $q=q_0+q_1\iq+q_2\jq+q_3\kq\in\Hq$, then $q$ and $\Re(q)+|\Im(q)|\iq$ are similar.
\end{lem}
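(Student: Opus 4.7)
The plan is to reduce the problem to a statement about unit pure imaginary quaternions, then exhibit an explicit unit quaternion realizing the similarity.

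First I would observe that the real part $\Re(q) = q_0$ is central in $\Hq$, so for any $u \in \Sph(\Hq)$ one has $uqu^* = \Re(q) + u\Im(q) u^*$. It is therefore enough to find $u \in \Sph(\Hq)$ with $u \Im(q) u^* = |\Im(q)|\iq$. If $\Im(q) = 0$ we take $u = 1$. Otherwise set $w := \Im(q)/|\Im(q)|$, a purely imaginary quaternion of norm one. The problem reduces to: for every unit purely imaginary $w \in \Hq$, produce $u \in \Sph(\Hq)$ with $u w u^* = \iq$, i.e.\ with $uw = \iq u$.

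The key computation is the following. Suppose $w \neq -\iq$, and set $u_0 := \iq + w$. Using that $w$ is purely imaginary of norm one we have $w^2 = -|w|^2 = -1$ and $\iq^2 = -1$, hence
\begin{equation*}
u_0 w = (\iq + w) w = \iq w + w^2 = \iq w - 1,\qquad \iq u_0 = \iq(\iq + w) = -1 + \iq w,
\end{equation*}
so $u_0 w = \iq u_0$. The quaternion $u_0$ is nonzero because, writing $w = w_1 \iq + w_2 \jq + w_3 \kq$, one checks $|u_0|^2 = u_0 u_0^* = (\iq + w)(-\iq - w) = 2 + 2 w_1$, which vanishes only when $w = -\iq$. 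Thus $u := u_0/|u_0| \in \Sph(\Hq)$ satisfies $uwu^* = \iq$ as required. In the remaining case $w = -\iq$, the unit quaternion $u = \jq$ works: $\jq(-\iq)\jq^* = \jq(-\iq)(-\jq) = \jq \iq \jq = -\kq \jq = \iq$.

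There is no real obstacle here; the only subtle point is that the naive ansatz $u_0 = \iq + w$ degenerates exactly when $w = -\iq$, which must be handled by a separate explicit choice. Conceptually, this proof is just the well-known fact that conjugation by unit quaternions realizes all rotations of the three-dimensional space of purely imaginary quaternions, applied to a rotation carrying $w$ to $\iq$; the explicit formula above is more direct than invoking the $\Sph(\Hq) \to SO(3)$ covering.
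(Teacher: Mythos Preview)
Your proof is correct and self-contained. Note, however, that the paper does not actually prove this lemma: it is quoted verbatim as Lemma~2.1 of \cite{z97} and used without argument. So there is no ``paper's proof'' to compare against; your explicit construction of the conjugating unit quaternion $u = (\iq + w)/|\iq + w|$ (with the separate treatment of $w = -\iq$) is a clean direct verification that stands on its own. The only minor remark is that your computation $u_0 u_0^* = (\iq + w)(-\iq - w) = 2 + 2w_1$ skips the intermediate step $-\iq^2 - \iq w - w\iq - w^2 = 2 - (\iq w + w\iq)$ together with $\iq w + w\iq = -2w_1$; you might want to spell that out, but the result is correct.
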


\subsection{Right eigenvalues of quaternionic matrices}\label{sec:reqm}
Let $A\in M_n(\Hq)$. Then $\lambda\in\Hq$ is called a {\it right eigenvalue} of $A$ if there exists a non zero vector $X\in\Hq^n$ such that $AX=X\lambda$. If $\lambda$ is a right eigenvalue of $A$, we can easily see that $q\lambda q^{-1}$ is still a right eigenvalue for all $q\in\Hq\setminus\{0\}$. So the right spectrum of $A$ is either  infinite or contained in $\R$. Since every quaternion is similar to an unique element of $\C/(z\sim \bar z)$, we can first restrict our attention to complex right eigenvalues. Let  $A_1,A_2\in M_n(\C)$ be \st $A=A_1+A_2\jq $ (such matrices exist and are unique because $q_0+q_1\iq+q_2\jq+q_3\iq=(q_0+q_1\iq)+(q_2+q_3\iq)\jq$). Then    for any $\lambda\in\C$ and
$X=Y+Z\jq$, with $Y,Z\in\C^n$,  
 we have an equivalence between
\begin{enumerate}
\item[(i)]   $AX=X\lambda$,
\item[(ii)] $\begin{pmatrix}A_1 & A_2 \\ -\overline{A}_2 & \overline{A}_1 \end{pmatrix}
\begin{pmatrix}Y \\ -\overline Z \end{pmatrix}=\lambda \begin{pmatrix}Y \\ -\overline Z \end{pmatrix}$,
\item[(iii)] $\begin{pmatrix}A_1 & A_2 \\ -\overline{A}_2 & \overline{A}_1 \end{pmatrix}
\begin{pmatrix}Z \\ \overline Y \end{pmatrix}=\overline\lambda \begin{pmatrix}Z \\ \overline Y \end{pmatrix}$.
\end{enumerate}
Hence, the right spectrum of $A$, when restricted to complex numbers, is finite and is given by the $2n$ eigenvalues of the complex matrix $A':=\begin{pmatrix}A_1 & A_2 \\ -\overline{A}_2 & \overline{A}_1\end{pmatrix}$, which appear in conjugate pairs.
The whole set of right eigenvalues of $A$ is the union of the  similitary classes of the complex right eigenvalues of $A$. Note that when $A=X(n)$ is the Gaussian quaternionic matrix introduced below, $A'$ is said to be taken in the {\it Ginibre Symplectic Ensemble}.

\section{Main results}\label{convergence complex distribution}
\subsection{Quaternionic random matrix models}\label{def_model_14411}

Let $X(n)$ be an $n\times n$ quaternionic random matrix with independent identically distributed entries, whose distribution is defined by the fact that    $X(n)_{1,1}=q_0+q_1\iq+q_2\jq+q_3\kq$,
where $q_0,q_1,q_2,q_3$ are independent     Gaussian variables with mean $0$ and variance $1/(4n)$, so that \[
\E(X(n)_{i,j})=0 \quad \text{ and }\quad \E\big(|X(n)_{i,j}|^2\big)=\frac{1}{n}, \quad \text{for all } i,j=1,\ldots,n.
\]
We can associate to $X(n)$ an $2n\times 2n$ complex random matrix $Y(n)$ as seen above in Section \ref{sec:reqm}.
We will denote by $(z_{n,i})_{i=1}^{2n}$ the $2n$ eigenvalues of $Y(n)$, with the convention  
\[
z_{n,n+i}=\overline{z_{n,i}}, \quad \text{ for all } i=1,\ldots,n,
\]
because the eigenvalues of $Y(n)$ appear in conjugate pairs.
The right eigenvalues of $X(n)$ are then the elements of the similarity classes of the $z_{n,i}$'s, for $i=1,\ldots,n$.
The distribution of the eigenvalues of $Y(n)$ has been calculated by Ginibre \cite{gin} and admits the following density with respect to the Lebesgue measure on $\C^n$: for $z=(z_1,\dots,z_n)\in\C^n$,  
\[
P_n(z)=\frac{1}{c_n}\exp\Big(-2n\sum_{i=1}^n|z_i|^2\Big)
\prod_{1\leq i<j \leq n}|z_i-z_j|^2|z_i-\bar z_j|^2\prod_{i=1}^n|z_i-\bar z_i|^2,
\]
 where $c_n$ is a normalization constant.

If we write
\[
\sum_{1\leq i\not = j\leq 2n } \log |z_i-z_j|=2 \sum_{1\leq i\not = j\leq n}\log |z_i-z_j||z_i-\bar z_j| 
+2\sum_{i=1,\ldots,n}\log |z_i-\bar z_i|,
\]
then the density $P_n$ can be rewritten as
\[
P_n(z)=\frac{1}{c_n}\exp\left(-\frac{1}{2} \bigg(2n\sum_{i=1}^{2n}|z_i|^2+\sum_{1\leq i\not = j\leq 2n } \log |z_i-z_j|^{-1}+\sum_{i=1}^{2n}\log |z_i-\bar z_i|^{-1} \bigg)\right).
\]

We will embed our considerations in a more general setting, since
the density $P_n$ can be generalized in the following way. Let $V:\C\rightarrow[0,+\infty]$ be a continuous non-negative function, conjugate invariant, and such that there is a constant $\delta>0$ such that $V(z)\geq (\delta+1)\log(|z|^2+1)$, for $|z|$ large enough. We suppose that the set $E_0=\{z\in\C\,|\, V(z)<\infty\}$ has positive capacity, \ie that there exists at least one   \pro measure $\mu$ with compact support contained in $E_0$    such that 
$ \int\!\!\int \log |x-y|^{-1} d\mu(x)d\mu(y) <\infty$ (this notion will become apparent in Section \ref{proofthmthcv14411}).
  For $z=(z_1, \ldots, z_n)\in\C^n$, set 
\[
K_n(z):=\sum_{1\leq i\not = j\leq 2n}k(z_i,z_j) \quad\textrm{ with }\quad k(x,y):=\log|x-y|^{-1}+\frac{1}{2}(V(x)+V(y)).
\]
Then we have   
\begin{align*}
K_n(z)
&=\sum_{1\leq i\not = j\leq 2n} \log|z_i-z_j|^{-1}+\sum_{1\leq i\not = j\leq 2n} V(z_i) \\
&=\sum_{1\leq i\not = j\leq 2n} \log|z_i-z_j|^{-1}+(2n-1)\sum_{i=1,\ldots,2n}V(z_i).
\end{align*}
Define the probability density $P_n^V$ on $\C^n$ by,
\[
P_n^V(z)=\frac{1}{c_n^V}\exp\left(-\frac{1}{2}\bigg( K_n(z) +\sum_{i=1}^{2n}V(z_i)+\sum_{i=1}^{2n}\log |z_i-\bar z_i|^{-1}\bigg) \right),
\]
where $c_n^V$ is the normalization constant.
One  recovers $P_n$ for  $V(x)=|x|^2$. Note also that the matrices having the structure 
$\begin{pmatrix}A_1 & A_2 \\ -\overline{A}_2 & \overline{A}_1 \end{pmatrix}$ are   non normal, so  when $V(z)$ is not of the form  $V(z)=a_1z^2+a_2\bar z^2+a_3z\bar z$, the density $P_n^V$ is     not the density of the complex eigenvalues of a quaternionic random matrix with distribution $\propto \exp(\operatorname{Trace} (b_1 A^2+b_2{A^*}^2+b_3 AA^*))d A$.

\subsection{Convergence of the empirical measure associated to a $P_n^V$-distributed sample}
The following theorem is proved for any potential $V$ as above. 
\begin{thm}\label{thcv14411}
Let $z^{(n)}=(z_{n,1},\ldots,z_{n,n})\in\C^n$ be distributed according to $P^V_n$. Then, the empirical distribution
\[
 \frac{1}{2n}\sum_{i=1}\delta_{z_{n,i}}
\]
converges almost surely, as $n$ goes to infinity, towards a compactly supported \pro  measure $\mu^V$, which is the unique minimum, on the set of  \pro measures on $\C$, of the  \emph{weighted logarithmic  energy}, defined    by the formula 
\[
I(\mu):=\int\int \log |x-y|^{-1} d\mu(x) d\mu(y) +\int V(z) d\mu(z),
\]
for $\mu$ a \pro measure on $\C$.
\end{thm}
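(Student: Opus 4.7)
The plan is to recognize $P_n^V$ as a weighted Coulomb-gas measure on a symmetrized system of $2n$ points, and then carry out the standard Hiai--Petz / Ben Arous--Guionnet analysis: establish sharp asymptotics of the partition function $c_n^V$ in terms of the minimum of $I$, deduce exponential concentration of the empirical measure around $\mu^V$, and conclude almost sure convergence by Borel--Cantelli.

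First I would introduce the symmetric empirical measure $\mu_n := \frac{1}{2n}\sum_{i=1}^{2n}\delta_{z_{n,i}}$ (with the convention $z_{n,n+i}=\overline{z_{n,i}}$) and rewrite
\[
-\log P_n^V(z)=\log c_n^V+\tfrac{1}{2}\sum_{i\neq j}\log|z_i-z_j|^{-1}+n\sum_{i=1}^{2n}V(z_i)+\tfrac{1}{2}\sum_{i=1}^{2n}\log|z_i-\bar z_i|^{-1}.
\]
Up to the last sum, this is $(2n)^2\, \widetilde I(\mu_n)$ where $\widetilde I$ is $I$ with the diagonal removed from the double integral; the last term is $O(n)$, of lower order than the leading $O(n^2)$ interaction and confinement. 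I would then carry out the three classical steps of logarithmic potential theory. (i) Existence and uniqueness of a minimizer $\mu^V$ of $I$ on $\mathcal{M}_1(\C)$: existence uses lower semicontinuity of $\mu\mapsto\iint\log|x-y|^{-1}d\mu\, d\mu$ and the coercivity supplied by $V(z)\geq(\delta+1)\log(|z|^2+1)$, which forces tightness of sublevel sets and compact support of any minimizer; uniqueness follows from strict convexity of the log-energy on measures of finite energy. (ii) Sharp asymptotics $\frac{1}{(2n)^2}\log c_n^V \to -I(\mu^V)$, obtained by bracketing: an upper bound by substituting a mollification of $\mu^V^{\otimes 2n}$ into the density and applying Jensen; a lower bound by applying Jensen to the Coulomb interaction with respect to an arbitrary test measure and then optimizing, combined with a minimum principle on the kernel $k(x,y)$. (iii) Exponential concentration: for any weak neighborhood $U$ of $\mu^V$, strict convexity yields $\epsilon>0$ with $I(\mu)\geq I(\mu^V)+\epsilon$ for all $\mu\notin U$ of finite energy; together with (ii) this gives $P_n^V(\mu_n\notin U)\leq e^{-cn^2}$ for some $c>0$ and $n$ large, and Borel--Cantelli delivers the almost sure convergence.

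The main obstacle is the extra self-conjugate term $\tfrac12\sum_i\log|z_i-\bar z_i|^{-1}$, which is positive near the real axis and could in principle distort the large deviations picture, especially since the upper-bound half of step (ii) requires controlling the probability of configurations where some $z_i$ is very close to $\bar z_i$. I would handle this by truncation: on the event $\{\min_i|z_i-\bar z_i|\geq\eta\}$ the extra term is uniformly $O(n\log(1/\eta))$, negligible at the $n^2$ scale; on its complement, a crude Coulomb-gas bound using only the $(2n-1)V(z_i)$ confinement and the pairwise repulsion of the $z_j$ with $j\neq i$ shows that the $P_n^V$-mass is exponentially small in $n^2$ as $\eta\to 0$. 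Combining this with the LDP-type bounds of step (iii) shows that the perturbation is subexponential, does not affect the leading-order rate function $I-\min I$, and the theorem follows.
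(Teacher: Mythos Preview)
Your proposal is correct and follows the full large-deviations route in the style of Ben Arous--Guionnet, but the paper's proof is considerably more economical and handles the extra self-conjugate term in a much simpler way.

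The paper does \emph{not} prove a large-deviation principle for $\mu_n$. Instead it works with the scalar quantity $\frac{1}{4n^2}K_n(z)$: it shows (Proposition~3.4) that $\mathbb{P}\bigl(\frac{1}{4n^2}K_n(z^{(n)})>E^V+\eta\bigr)\le e^{-2\varepsilon n^2}$, and then uses a purely deterministic argument (Proposition~3.2, via truncated kernels $k^l=k\wedge l$) to pass from $\limsup_n \frac{1}{4n^2}K_n(z^{(n)})\le E^V$ to weak convergence $\mu_{z^{(n)}}\to\mu^V$. Only the \emph{one-sided} partition-function estimate $\limsup_n \frac{1}{2n^2}\log(1/c_n^V)\le E^V$ (Lemma~3.3) is needed. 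This bypasses the delicate LDP upper bound on weak neighbourhoods that you sketch in step~(iii).

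Regarding the term $\frac{1}{2}\sum_i\log|z_i-\bar z_i|^{-1}$: your truncation on $\{\min_i|z_i-\bar z_i|\ge\eta\}$ is unnecessary for the concentration step. In the density this term contributes the factor $\prod_{i=1}^n|z_i-\bar z_i|$, which is simply absorbed into a finite reference integral: in the proof of Proposition~3.4 the paper bounds
\[
\mathbb{P}\Bigl(\tfrac{1}{4n^2}K_n>E^V+\eta\Bigr)\le \frac{1}{c_n^V}\,e^{-2n^2(E^V+\eta)}\Bigl(\int_{\C} e^{-V(x)}|x-\bar x|\,dx\Bigr)^{n},
\]
and the integral $a=\int e^{-V(x)}|x-\bar x|\,dx$ is finite by the growth assumption on $V$. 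That is the whole story for this term on the concentration side. Where your concern \emph{is} relevant is in the lower bound $c_n^V\ge e^{-2n^2(E^V+o(1))}$ (the content of Lemma~3.3): there one must produce test configurations with $\prod_i|z_i-\bar z_i|$ bounded below, and your idea of choosing points at distance $\ge\eta$ from the real axis is exactly what is needed. The paper does not spell this out, referring instead to the Hermitian analogue.

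In short: your approach would work, but the paper's is shorter because it (a) concentrates a scalar rather than the measure itself, (b) uses only one direction of the partition-function asymptotics, and (c) treats the $|z-\bar z|$ factor as a harmless weight on the reference measure rather than as an obstacle to be truncated away.
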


Taking $V(z)=|z|^2$ in the last theorem and identifying $\mu^V$, we get the following corollary about the convergence of the empirical distribution of the complex right eigenvalues of the Gaussian random quaternionic matrix $X(n)$ introduced in Section \ref{def_model_14411}.

\begin{cor} \label{VP.complexes.25.11.08}
Let  $z_{n,1}, \ldots, z_{n,2n}$ be the complex right  eigenvalues of $X(n)$.  As $n$ tends to infinity, the   \pro measure 
\[
\ff{2n}\sum_{i=1}^{2n}\delta_{z_{n,i}}
\]
 tends  almost surely to the uniform law $\mu$ on the unit disk of the complex plane.
\end{cor}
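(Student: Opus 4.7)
The plan is to obtain Corollary \ref{VP.complexes.25.11.08} directly from Theorem \ref{thcv14411} applied to the quadratic potential $V(z) = |z|^2$. As was already observed in Section \ref{def_model_14411}, the joint density of the complex right eigenvalues of $X(n)$ coincides with $P_n^V$ for this choice of $V$, so the question reduces to checking the hypotheses of the theorem and then identifying the minimizer $\mu^V$ explicitly as the uniform law on the unit disk.

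Verifying the hypotheses is essentially immediate: $V(z)=|z|^2$ is continuous and conjugate invariant, $|z|^2/\log(|z|^2+1)\to\infty$ as $|z|\to\infty$ (which is much stronger than the required growth condition), and $\{V<\infty\}=\C$ clearly has positive capacity. Theorem \ref{thcv14411} therefore yields the almost sure convergence of the empirical measure $\tfrac{1}{2n}\sum_{i=1}^{2n}\delta_{z_{n,i}}$ toward the unique minimizer $\mu^V$ of the weighted logarithmic energy
\[
I(\mu) = \int\!\!\int \log|x-y|^{-1}\, d\mu(x)\, d\mu(y) + \int |z|^2\, d\mu(z).
\]

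The remaining task is to identify $\mu^V$. My plan is to invoke the classical Frostman / Euler--Lagrange characterization from weighted potential theory: $\mu^V$ is characterized by the existence of a constant $C$ such that $2U^{\mu^V}(x)+V(x)=C$ on $\operatorname{supp}\mu^V$ and $\geq C$ quasi-everywhere on $\C$, where $U^\mu(x):=\int\log|x-y|^{-1}\, d\mu(y)$. Formally taking Laplacians in the Euler--Lagrange identity on the interior of the support gives $-4\pi\rho+\Delta V=0$, so that (with $\Delta V = 4$) the density of $\mu^V$ with respect to Lebesgue measure must equal $1/\pi$; combined with the total mass constraint and the rotational symmetry of $V$, this forces the uniform law $\mu$ on $D(0,1)$ as the natural candidate. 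To close the argument, I would simply check the Euler--Lagrange conditions directly for this $\mu$ using the classical potential
\[
U^\mu(x) = \tfrac{1}{2}(1-|x|^2) \ \text{for } |x|\leq 1, \qquad U^\mu(x) = -\log|x| \ \text{for } |x|\geq 1:
\]
the interior identity $2U^\mu(x)+V(x)\equiv 1$ is then immediate, and the exterior inequality $|x|^2-2\log|x|\geq 1$ for $|x|\geq 1$ is a one-line calculus exercise. Uniqueness of the minimizer gives $\mu^V=\mu$, which is the claim. There is no substantial obstacle in this route: Theorem \ref{thcv14411} carries all the probabilistic weight, and the explicit identification of $\mu^V$ is a standard computation in weighted potential theory; the only minor point to handle carefully is the justification of the Euler--Lagrange characterization for the particular class of admissible $V$ considered here.
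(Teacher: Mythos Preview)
Your proposal is correct and follows essentially the same route as the paper: apply Theorem~\ref{thcv14411} with $V(z)=|z|^2$, then identify $\mu^V$ as the uniform law on $D(0,1)$ via the Frostman/Euler--Lagrange characterization (this is exactly the paper's Theorem~\ref{caracpot}, quoted from Saff--Totik) together with the explicit computation of $U^\mu$. The only cosmetic difference is that you add a heuristic Laplacian argument to motivate the candidate measure, whereas the paper simply posits $\mu$ and verifies the variational conditions directly.
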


\subsection{Limits of uniformly chosen right eigenvalues of quaternionic random matrices}
 
\begin{thm} \label{VP.quat.25.11.08}
Let  $X(n)$ be as defined in Section \ref{def_model_14411}.    Let $C_{n,1},\ldots, C_{n,n}$ be the   similarity classes of its right spectrum, and $c_{n,1}, \ldots, c_{n,n}$ be elements taken independently at random, uniformly in respectively $C_{n,1}, \ldots, C_{n,n}$. Then as $n\to\infty$, the empirical \pro measure $$\ff{n}\sum_{i=1}^{n}\delta_{c_{n,i}}$$ tends  almost surely to the law on $\Hq$ with density with respect to the Lebesgue measure  \be\label{26.11.08.2}\rho(q)=\ff{2\pi^2 |\Im q|^2}\one_{ |q|\leq 1}.\ee 
\end{thm}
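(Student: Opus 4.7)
The first step is to describe each similarity class $C_{n,i}$ geometrically. By Lemma \ref{lem_22811}, writing $z_{n,i}=a_{n,i}+b_{n,i}\iq$ for the complex right eigenvalue associated to $C_{n,i}$ (chosen with $b_{n,i}\geq 0$), one has
\[
C_{n,i} = \{u z_{n,i} u^* : u\in\Sph(\Hq)\} = a_{n,i} + b_{n,i}\,\Sph^2,
\]
where $\Sph^2$ denotes the unit sphere of the three-dimensional real subspace $\Im(\Hq)=\R\iq+\R\jq+\R\kq$; this uses that $u\mapsto u\iq u^*$ surjects $\Sph(\Hq)$ onto $\Sph^2$. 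Thus each $C_{n,i}$ is a round $2$-sphere of center $a_{n,i}$ and radius $b_{n,i}$, and sampling $c_{n,i}$ uniformly on $C_{n,i}$ is the same as choosing $\omega_{n,i}$ uniformly on $\Sph^2$ (independently of $X(n)$ and across $i$).

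Next, for a fixed bounded continuous $f:\Hq\to\R$, I would introduce the symmetrized test function $\tilde f:\C\to\R$ defined by
\[
\tilde f(a+b\iq) := \frac{1}{4\pi}\int_{\Sph^2} f(a+|b|\omega)\,d\sigma(\omega),
\]
where $d\sigma$ is the rotation-invariant measure on $\Sph^2$ of total mass $4\pi$. This function is bounded, continuous, satisfies $\tilde f(\bar z)=\tilde f(z)$, and $\E[f(c_{n,i})\mid X(n)]=\tilde f(z_{n,i})$. Since conditionally on $X(n)$ the $c_{n,i}$'s are independent, Hoeffding's inequality gives
\[
\P\!\left(\,\left|\frac{1}{n}\sum_{i=1}^n \bigl(f(c_{n,i})-\tilde f(z_{n,i})\bigr)\right|>\epsilon \;\Big|\; X(n)\right) \leq 2\exp\!\bigl(-n\epsilon^2/(2\|f\|_\infty^2)\bigr),
\]
which, by the Borel--Cantelli lemma, forces the left-hand difference to zero almost surely. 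Using the conjugate-pair structure of the complex eigenvalues together with $\tilde f(\bar z)=\tilde f(z)$, one rewrites $\frac{1}{n}\sum_{i=1}^n\tilde f(z_{n,i})=\frac{1}{2n}\sum_{i=1}^{2n}\tilde f(z_{n,i})$, which by Corollary \ref{VP.complexes.25.11.08} converges almost surely to $\int_\C\tilde f\,d\mu$, where $\mu$ is the uniform probability measure on the unit disk of $\C$.

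What remains is to identify $\int_\C\tilde f\,d\mu$ with $\int_\Hq f\rho$. Writing the disk integral in Cartesian coordinates $(a,b)$, using the $b\leftrightarrow -b$ symmetry to restrict to $b\geq 0$, and then passing from spherical coordinates $(b,\omega)\in[0,\infty)\times\Sph^2$ to Cartesian coordinates $r=b\omega\in\Im(\Hq)\cong\R^3$ (so that $db\,d\sigma(\omega)=|r|^{-2}\,d^3r$), a short computation gives
\[
\int_\C \tilde f\,d\mu \;=\; \frac{1}{2\pi^2}\int_{-1}^{1}\!\!\int_{|r|^2\leq 1-a^2}\frac{f(a+r)}{|r|^2}\,d^3r\,da \;=\; \int_{|q|\leq 1}\frac{f(q)}{2\pi^2|\Im q|^2}\,dq.
\]
Running the argument for $f$ in a countable dense family of bounded continuous functions then yields the stated weak convergence on a single almost-sure event.

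The only step that requires actual computation is this last change of variables, but it is elementary; the real content of the argument is the clean reduction, via the spherical-averaging trick and Hoeffding concentration, to the already established Corollary \ref{VP.complexes.25.11.08}. The one point to be careful about is the conjugate-invariance $\tilde f(\bar z)=\tilde f(z)$, which is what lets the sum over the $n$ sampled classes be converted to the sum over all $2n$ complex eigenvalues to which the previously proved convergence applies.
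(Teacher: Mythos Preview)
Your proof is correct and complete, but it follows a different route from the paper's. The paper first proves a general lemma (via a fourth-moment computation) stating that if two independent exchangeable samples have empirical measures converging a.s.\ to $\mu$ and $\nu$, then the empirical measure of the paired sample converges a.s.\ to $\mu\otimes\nu$; it then applies this to the pairs $(z_{n,i},u_i)$ with $u_i$ Haar on $\Sph(\Hq)$, pushes forward by $(z,u)\mapsto uzu^*$, and identifies the limit using a separate proposition (derived from a transitive-action argument) describing the law of $uzu^*$. Your approach sidesteps both auxiliary results: by conditioning on $X(n)$ and invoking Hoeffding's inequality you get the concentration of $\frac{1}{n}\sum f(c_{n,i})$ around $\frac{1}{n}\sum\tilde f(z_{n,i})$ directly, and the spherical-averaging map $f\mapsto\tilde f$ absorbs the orbit identification into the definition of the test function. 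This is more elementary and more self-contained; what the paper's route buys is a reusable product-empirical-measure lemma, but for the present statement your argument is shorter. The final change of variables and the passage to a countable convergence-determining class are handled correctly in both proofs.
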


\begin{rem}[Alternative formulations of  Theorem \ref{VP.quat.25.11.08}] By Lemma \ref{lem_22811}, we have ${C_{n,i}}=\{q\in \Hq, \,\;\, \Re (q)=\Re (c_{n,i}),\; |\Im (q)|=|\Im (c_{n,i})|\}$, hence ${C_{n,i}}$  is a sphere of an affine three-dimensional subspace, with radius $r_{n,i}:=|\Im (c_{n,i})|$. Let us  denote by $\restriction{\mathcal S}{C_{n,i}}$ the uniform measure on ${C_{n,i}}$, with a normalization inherited from the euclidian structure of $\Hq$, i.e. with total mass $4\pi r_{n,i}^2$ and set $V_n:=\sum_{i=1}^n4\pi r_{n,i}^2$ to be the total mass of the right spectrum of $X(n)$. 
It can easily be deduced from the proof of Theorem \ref{VP.quat.25.11.08} that this theorem  is equivalent to its following alternative version:
as $n$ goes to infinity, both random \pro measures \be\label{mes_alt}\ff{V_n}\sum_{i=1}^{n}4\pi r_{n,i}^2\delta_{c_{n,i}}\quad\textrm{ and }\quad \ff{V_n}\sum_{i=1}^{n}\restriction{\mathcal S}{C_{n,i}}\ee converge weakly to the uniform \pro measure on the unit ball of $\Hq$. Note that  both \pro measures of  \eqref{mes_alt} correspond to the picking of random right eigenvalues of $X(n)$ at random, with probabilities proportional  to the masses of their similarity classes. The second \pro measure of  \eqref{mes_alt} is, in a certain sense, the exact empirical distribution of the right eigenvalues of $X(n)$. As a consequence, an alternative formulation of Theorem \ref{VP.quat.25.11.08} is the following one: {\bf the empirical distribution on the right spectrum of $X(n)$ converges to the uniform \pro measure on the unit euclidian ball of $\Hq$}.
\end{rem}

We see from Theorem \ref{VP.quat.25.11.08} that the empirical spectral distribution of $X(n)$ does not tend to  the uniform measure on the unit   sphere $\Sp(\Hq)$ of $\Hq$. One can ask if it is possible to find a quaternionic random matrix model for which the empirical distribution of right eigenvalues would converge to the uniform measure on $\mathbb S(\Hq)$. We next show that it is not possible if one restricts to quadratic potentials, \ie potentials of the form $V(z)=a_1z^2+a_2\bar z^2+a_3z\bar z$ corresponding to a more general Gaussian quaternionic random matrix model. 

 \begin{thm} \label{nonquadpotthm}
There is no  quadratic potential $V$ such that for a $P_n^V$-distributed random vector 
$(z_{n,1},\ldots,z_{n,2n})$ and some independent  $u_i$'s uniformly distributed on $\Sph(\Hq)$, the random measure  $\frac{1}{2n}\sum_{i=1}^{2n}\delta_{u_iz_{n,i}u_i^*}$ converges to the uniform measure of $\Sph(\Hq)$.
\end{thm}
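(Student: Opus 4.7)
\noindent\emph{Proof plan.} I would argue by contradiction, transferring the hypothetical convergence on $\Sph(\Hq)$ into a structural constraint on the planar equilibrium measure $\mu^V$ that the standard form of $\mu^V$ for quadratic weights cannot satisfy. Suppose such a quadratic $V$ exists. The conjugate invariance required by the framework of Section~\ref{def_model_14411} forces $V(z)=2a_1\Re(z^2)+a_3|z|^2$ with $a_1,a_3\in\R$, and the hypothesis $V(z)\geq(\delta+1)\log(|z|^2+1)$ combined with $V\geq 0$ forces $a_3>2|a_1|$, so in particular $\Delta V\equiv 4a_3>0$ on $\C$. Theorem~\ref{thcv14411} then gives that $\mu_n:=\ff{2n}\sum_{i=1}^{2n}\delta_{z_{n,i}}$ converges almost surely to the weighted equilibrium measure $\mu^V$.

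I would then transfer this convergence to the conjugated empirical measure $\nu_n:=\ff{2n}\sum_{i=1}^{2n}\delta_{u_iz_{n,i}u_i^*}$. For a bounded continuous $f:\Hq\to\R$, set $g(z):=\int_{\Sph(\Hq)}f(uzu^*)\,du$; this is bounded and continuous on $\C$. Conditioning on $\sigma(z_{n,i})$ and using the independence of the $u_i$'s, the summands $f(u_iz_{n,i}u_i^*)$ become independent with conditional mean $g(z_{n,i})$, so $\int f\,d\nu_n$ has conditional mean $\int g\,d\mu_n$ and conditional variance at most $\|f\|_\infty^2/(2n)$. A concentration bound (e.g.\ McDiarmid) together with a conditional Borel--Cantelli yield almost sure convergence of $\int f\,d\nu_n$ towards $\int g\,d\mu^V$, hence $\nu_n$ converges weakly, almost surely, to the push-forward $\tilde\mu^V$ of $\mu^V\otimes du$ under $(z,u)\mapsto uzu^*$. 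Since $|uzu^*|=|z|$ by Lemma~\ref{lem_22811}, the assumption that $\tilde\mu^V$ equals the uniform measure on $\Sph(\Hq)$ forces $\operatorname{supp}(\mu^V)\subset\{z\in\C:|z|=1\}$, a set of zero two-dimensional Lebesgue measure.

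The final step, and the one I expect to be the main obstacle, is to rule out this containment. The key input is the classical characterization of weighted equilibrium measures in two-dimensional logarithmic potential theory: for a $C^2$ potential with strictly positive Laplacian, the minimizer of the weighted log-energy is absolutely continuous with respect to planar Lebesgue measure, with density $\Delta V/(4\pi)$ on its compact support. This is obtained by applying $-\Delta/(4\pi)$ to the Frostman variational identity $2U^{\mu^V}+V=F_V$, which holds on the support of $\mu^V$. In our situation this density equals the strictly positive constant $a_3/\pi$, so $\mu^V$ must charge a region of positive planar area (in fact of area $\pi/a_3$), in direct contradiction to the containment derived above, which closes the argument.
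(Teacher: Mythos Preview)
Your approach is correct in spirit and genuinely different from the paper's. The paper first identifies the \emph{specific} limit measure on $\C$ that would have to arise---namely $d\nu(z)=2|\Im z|^2\,d\sigma(z)$ on the unit circle---then computes $U^\nu$ explicitly (this occupies the appendix) and checks by hand that the Frostman inequality $2U^\nu+V\geq l$ fails on the open disk for every quadratic $V$. You bypass all of that computation: you only extract from the hypothetical convergence that $\operatorname{supp}(\mu^V)\subset\Sph^1$, and then kill this by a general structural fact about equilibrium measures of strictly subharmonic smooth weights. Your route is shorter and more conceptual; the paper's is fully explicit and self-contained.

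One point to tighten. Your justification of the last step---``apply $-\Delta/(4\pi)$ to $2U^{\mu^V}+V=F_V$, which holds on the support''---is circular as written: that identity holds only on $\operatorname{supp}(\mu^V)$, and taking a Laplacian there presupposes the support has nonempty interior, which is precisely what you want to conclude. The clean way to finish is a maximum-principle argument. If $\operatorname{supp}(\mu^V)\subset\Sph^1$, then $U^{\mu^V}$ is harmonic on the open unit disk, so $\psi:=2U^{\mu^V}+V-F_V$ satisfies $\Delta\psi=\Delta V=4a_3>0$ there; thus $\psi$ is strictly subharmonic on the disk. The Frostman conditions give $\psi\geq 0$ on the disk and $\psi=0$ on its boundary $\Sph^1$, so the subharmonic maximum principle forces $\psi\leq 0$, hence $\psi\equiv 0$, on the disk---contradicting $\Delta\psi>0$. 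This replaces the appeal to ``density $\Delta V/(4\pi)$'' and makes your argument fully rigorous without importing any outside theorem. (Alternatively, for quadratic $V$ one may simply cite that $\mu^V$ is known to be uniform on an ellipse; but the maximum-principle argument is self-contained and no longer than the citation.)
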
 

Note that it remains open whether there are some more general quaternionic random matrix models whose spectral empirical distribution  converges towards the uniform distribution.

\vskip 1.5cm

\section{Proof of Theorem \ref{thcv14411}} \label{proofthmthcv14411}

\vskip 0.5cm

The proof is quite classical. The arguments are close to the ones of \cite{bag97,baz98,deift,johansson-fluctuations,alice-greg-ofer} and use logarithmic potential theory.\\

Firstly, the fact that the functional $I$ admits a minimum  achieved at a unique \pro measure $\mu^V$, which is compactly supported, is the statement of Theorem~1.3 p. 27  of  \cite{saff}. This measure is called the \emph{equilibrium measure} of $V$.
Let us denote by $E^V$ the minimum of $I$, \ie $E^V=I(\mu^V)$.

For $x\in\C$, let $H(x)=V(x)-\log(|x|^2+1)$. We remark that $k(x,x')\geq \frac{H(x)+H(x')}{2}$, for all $x,x'\in\C$, since $|x-x'|^2\leq (|x|^2+1)(|x'|^2+1)$. So, $k$ is bounded from below. 
Let $l\geq 0$. For $\mu$ a \pro measure on $\C$, let us define
\[
k^l=k\wedge l, \; I^l(\mu)=\int k^l(x,y) d\mu(x) d\mu(y) \text{ and } K^l_n(z)=\sum_{1\leq i\not = j\leq 2n} k^l(z_i,z_j).
\]
Since $k^l$ is continuous almost everywhere, by monotone convergence, $I^l$ converges to $I$ as $l$ goes to infinity.

\noindent{\bf Notation.} {\it
For $z=(z_1,\ldots,z_n)\in\C^n$, we will denote by $\mu_z$ the empirical distribution of $z_1,\ldots,z_{2n}$ (with our convention that $z_{n+i}=\bar z_i$, for $i=1,\ldots,n$), \ie
\[
\mu_z:=\frac{1}{2n}\sum_{i=1}^{2n}\delta_{z_i}=\frac{1}{2n}\sum_{i=1}^n(\delta_{z_i}+\delta_{\bar z_i}).
\]
}

An immediate calculation gives 
\be\label{majIl}I^l(\mu_z)=\frac{1}{4n^2}K^l_n(z)+\frac{l}{2n}.\ee
The following two facts will be useful for the proof of the convergence of the empirical distribution.
\begin{lem} \label{tightlem}
\begin{enumerate}
\item[(i)] For all $n$, consider    $z^{(n)}\in\C^n$ such that the sequence $\frac{1}{4n^2}K_n(z^{(n)})$ is bounded. Then the sequence   $\mu_{z^{(n)}}$ is tight.
\item[(ii)] Let $\mu_n$ be a tight sequence of probability measures on $\C$ such that for all $l$, 
\[
\limsup_n I^l(\mu_n)\leq E^V.
\] 
Then $\mu_n$ converge  weakly to $\mu^V$.
\end{enumerate}
\end{lem}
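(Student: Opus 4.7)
My plan is to treat the two parts of the lemma separately, both leveraging the inequality $k(x,y)\ge \tfrac12\bigl(H(x)+H(y)\bigr)$ already established in the text, where $H(x)=V(x)-\log(|x|^2+1)$.

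For part (i), I first note that $H$ is bounded below on $\C$: on compact sets this is clear since $V\ge 0$ and $\log(|x|^2+1)$ is bounded, while outside a large ball the growth hypothesis $V(x)\ge(\delta+1)\log(|x|^2+1)$ yields $H(x)\ge \delta\log(|x|^2+1)\ge 0$. Summing the pointwise lower bound over $1\le i\ne j\le 2n$ gives
\[
K_n(z)\;\ge\;(2n-1)\sum_{i=1}^{2n}H(z_i)\;=\;2n(2n-1)\int H\,d\mu_z.
\]
Dividing by $4n^2$ and invoking the assumed boundedness of $\tfrac{1}{4n^2}K_n(z^{(n)})$, I conclude that $\int H\,d\mu_{z^{(n)}}$ remains bounded. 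Combined with the growth $H(x)\ge \delta\log(|x|^2+1)-C$ at infinity, Markov's inequality then yields a uniform bound on $\mu_{z^{(n)}}(\{|x|>R\})$ that vanishes as $R\to\infty$, i.e.\ tightness.

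For part (ii), I use a standard subsequence argument. By tightness, along any subsequence I may extract a further subsequence along which $\mu_n$ converges weakly to some probability measure $\nu$ on $\C$; it suffices to prove $\nu=\mu^V$. Each truncated kernel $k^l=k\wedge l$ is bounded (above by $l$, below by $\inf_{\C^2}\tfrac12(H(x)+H(y))$) and lower semi-continuous on $\C^2$, since $(x,y)\mapsto\log|x-y|^{-1}$ is LSC while $V$ is continuous. Weak convergence of $\mu_n\otimes\mu_n$ to $\nu\otimes\nu$ on $\C^2$ together with the Portmanteau inequality for bounded LSC functions gives
\[
I^l(\nu)\;\le\;\liminf_n I^l(\mu_n)\;\le\;\limsup_n I^l(\mu_n)\;\le\;E^V.
\]
Letting $l\to\infty$ and using the convergence $I^l(\nu)\nearrow I(\nu)$ recalled just above the lemma, I obtain $I(\nu)\le E^V$; since $\mu^V$ is the unique minimizer of $I$ (Theorem~1.3 of \cite{saff}), this forces $\nu=\mu^V$. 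As every subsequential limit equals $\mu^V$, the full sequence $\mu_n$ converges weakly to $\mu^V$.

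The delicate point is really in (ii): the singularity of $\log|x-y|^{-1}$ on the diagonal prevents $k$ itself from being bounded above, which is precisely why the truncated kernels $k^l$ are introduced. Once truncated, the kernel is bounded on both sides and lower semi-continuous, so the LSC half of Portmanteau applies cleanly to the product measures; the subsequent passage $l\to\infty$ is then a routine monotone convergence, since the $k^l$ are uniformly bounded from below.
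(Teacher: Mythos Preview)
Your proof is correct and follows essentially the same route as the paper. Part~(i) is exactly the paper's argument, spelled out in full; in part~(ii) the paper asserts that $I^l$ is continuous for weak convergence (because $k^l$ is bounded and continuous a.e.), while you use only lower semicontinuity of $k^l$ together with the one-sided Portmanteau inequality---a slightly more cautious but equally valid variant of the same idea.
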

\begin{proof}
 (i) 
follows from the facts that $H(x)\to\infty$ as $|x|\to\infty$ and $k(x,y)\geq \frac{H(x)+H(y)}{2}$.

  (ii) Let $\sigma$ be an accumulation point of   $\mu_n$. The functional  $I^l$ is continuous for the weak convergence because $k^l$ is bounded and continuous a.e., so 
$
I^l(\sigma)\leq E^V
$
by hypothesis. Letting $l\to\infty$, we get $I(\sigma)\leq E^V$. By unicity of the equilibrium measure, the result follows. 
\end{proof}

From this lemma and Equation  \eqref{majIl}, we deduce the following proposition.
\begin{prop} \label{convded}
Let, for all $n$, $x^{(n)}\in\C^n$ such that 
\[
\limsup_n \frac{1}{4n^2}K_n(x^{(n)})\leq E^V.
\]
Then $\mu_{x^{(n)}}$ converges weakly  to $\mu^V$ as $n$ goes to infinity.
\end{prop}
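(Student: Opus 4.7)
\medskip

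The plan is to chain together the two parts of Lemma \ref{tightlem} via Equation \eqref{majIl}; under the stated hypothesis, everything should fall into place with essentially no extra work.

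First I would use part (i) of Lemma \ref{tightlem}: since $\limsup_n \frac{1}{4n^2} K_n(x^{(n)}) \leq E^V < \infty$, the sequence $\frac{1}{4n^2} K_n(x^{(n)})$ is in particular bounded, so the empirical measures $\mu_{x^{(n)}}$ form a tight family on $\mathbb{C}$.

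Next I would bound $I^l(\mu_{x^{(n)}})$ in terms of the hypothesis. Since $k^l = k \wedge l \leq k$ pointwise, summing over all pairs $i \neq j$ gives $K_n^l(x^{(n)}) \leq K_n(x^{(n)})$. Combined with \eqref{majIl}, this yields
\[
I^l(\mu_{x^{(n)}}) = \frac{1}{4n^2} K_n^l(x^{(n)}) + \frac{l}{2n} \leq \frac{1}{4n^2} K_n(x^{(n)}) + \frac{l}{2n}.
\]
Taking $\limsup_n$, the second term vanishes (for any fixed $l$) and the first is bounded by $E^V$ by hypothesis, so
\[
\limsup_n I^l(\mu_{x^{(n)}}) \leq E^V \quad \text{for every } l \geq 0.
\]

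Finally, I would apply part (ii) of Lemma \ref{tightlem} to conclude that $\mu_{x^{(n)}}$ converges weakly to $\mu^V$.

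There is no real obstacle here: the argument is a direct deduction from the lemma and the identity \eqref{majIl}. The only conceptually delicate point (already handled in the lemma) is that one must work with the truncated kernel $k^l$ because $k$ itself has a logarithmic singularity on the diagonal and is not continuous, so weak convergence of measures does not directly control $I$; the truncation circumvents this, and then the passage $l \to \infty$ is handled by the unicity of the equilibrium measure as in the proof of Lemma \ref{tightlem}(ii).
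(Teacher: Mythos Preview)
Your argument is correct and is precisely the deduction the paper has in mind: it simply states that the proposition follows ``from this lemma and Equation \eqref{majIl}'' without spelling out the details, and your three steps (tightness via (i), the bound $I^l(\mu_{x^{(n)}})\le \frac{1}{4n^2}K_n(x^{(n)})+\frac{l}{2n}$ from $k^l\le k$ and \eqref{majIl}, then (ii)) are exactly how one fills this in.
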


We shall now prove a concentration result of $\frac{1}{4n^2}K_n(z)$, under the distribution $P_n^V$,  which is the analogue of the same result in the Hermitian case.
Firstly, the following lemma gives some estimate on the normalization constant $c_n^V$. The proof follows exactly the lines of the analogue result for the Hermitian case, so we refer to \cite{deift} or \cite{johansson-fluctuations} for the details.

\begin{lem}\label{renorm}
As $n$ goes to infinity, 
$
\limsup \frac{1}{2n^2}\log \frac{1}{c_n^V} \leq E^V.
$
\end{lem}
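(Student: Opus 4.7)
The lemma asserts a lower bound on the normalization constant
\[
c_n^V = \int_{\C^n} \exp\Bigl(-\tfrac12\bigl(K_n(z) + \sum_{i=1}^{2n} V(z_i) + \sum_{i=1}^{2n}\log|z_i - \bar z_i|^{-1}\bigr)\Bigr)\, dz.
\]
The standard approach from the theory of log-gases is to apply Jensen's inequality against a trial measure approximating the equilibrium measure $\mu^V$.

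\textbf{Step 1 (Trial measure).} For each $\varepsilon > 0$, I would construct a probability measure $\sigma_\varepsilon$ on $\C$ that is conjugate invariant, compactly supported in $E_0$, absolutely continuous with bounded continuous density $f_\varepsilon$, and satisfies $I(\sigma_\varepsilon) \leq E^V + \varepsilon$. The natural construction is $\sigma_\varepsilon := \mu^V * \phi_\varepsilon$ with $\phi_\varepsilon$ a conjugate-invariant mollifier of radius $\varepsilon$; the compact support of $\mu^V$ inside $E_0$ (from weighted potential theory, using the growth hypothesis $V(z) \geq (\delta+1)\log(|z|^2 + 1)$) together with the continuity of the logarithmic energy under mollification then yields the desired properties.

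\textbf{Step 2 (Jensen).} Rewriting the integrand as a density against $\sigma_\varepsilon^{\otimes n}$ and applying Jensen's inequality gives
\[
\log c_n^V \geq -\tfrac12\, \E_{\sigma_\varepsilon^{\otimes n}}\bigl[K_n(Z) + \sum_i V(Z_i) + \sum_i \log|Z_i - \bar Z_i|^{-1}\bigr] - n \int \log f_\varepsilon\, d\sigma_\varepsilon,
\]
where $Z_1,\dots,Z_n$ are i.i.d.\ with law $\sigma_\varepsilon$ and $Z_{n+i} := \bar Z_i$.

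\textbf{Step 3 (Computation and conclusion).} Splitting $K_n = \sum_{1\leq i\neq j\leq 2n} k(Z_i, Z_j)$ into four index blocks according to whether each index is $\leq n$ or $> n$, and using conjugate invariance of $\sigma_\varepsilon$, the $4n(n-1)$ ``off-diagonal'' pairs (with independent underlying coordinates) each contribute $I(\sigma_\varepsilon)$, while the $2n$ ``diagonal'' pairs $(i, n+i)$ and $(n+i, i)$ each contribute the finite quantity $\int k(x,\bar x)\, d\sigma_\varepsilon(x)$. Boundedness of $f_\varepsilon$ on a compact support keeps all remaining integrals finite, yielding
\[
\log c_n^V \geq -2n(n-1)\, I(\sigma_\varepsilon) - C_\varepsilon\, n.
\]
Dividing by $2n^2$ gives $\limsup_n \tfrac{1}{2n^2}\log \tfrac{1}{c_n^V} \leq I(\sigma_\varepsilon) \leq E^V + \varepsilon$, and the lemma follows by letting $\varepsilon \to 0$.

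The main technical hurdle lies in Step 1: one must construct $\sigma_\varepsilon$ so that all four properties hold simultaneously — in particular, both $\int \log f_\varepsilon\, d\sigma_\varepsilon$ and $\int \log|x-\bar x|^{-1}\, d\sigma_\varepsilon(x)$ must be finite (automatic for a bounded density on compact support, since $\int_{|t|<1}\log|t|^{-1}\,dt<\infty$), and the convergence $I(\sigma_\varepsilon) \to E^V$ must be verified, which requires carefully propagating the weak convergence $\sigma_\varepsilon \to \mu^V$ through both the logarithmic self-energy and the $V$-integral. The remaining arithmetic is essentially the same as in the Hermitian case.
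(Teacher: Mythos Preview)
Your approach is correct and is precisely the standard argument the paper alludes to: the paper gives no detailed proof of this lemma, only referring to Deift and Johansson for the analogous Hermitian result, and the method there is exactly the Jensen-against-a-trial-measure computation you outline. Your handling of the extra quaternionic term $\sum_i \log|z_i-\bar z_i|^{-1}$ as an $O(n)$ contribution (finite for a bounded density on a compact set) is the only adaptation needed beyond the classical case, and your block counting of $K_n$ into $4n(n-1)$ independent pairs plus $2n$ diagonal pairs is accurate.
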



The concentration of $\frac{1}{4n^2}K_n(z^{(n)})$ around $E^V$ is the content  of the next proposition.
\begin{prop} \label{concentr}
Let $z^{(n)}$ be a random vector of $\C^n$ having  density $P_n^V$.
Then, for all $0<\varepsilon<\eta$,  for $n$ large enough,
\[
\proba\left(\ff{4n^2}K_n(z^{(n)})>E^V+\eta\right)\leq e^{-2\varepsilon n^2}.
\]
\end{prop}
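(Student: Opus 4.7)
My strategy would be a Chernoff-style bound based on the explicit form of the density $P_n^V$, combined with the upper bound on $1/c_n^V$ furnished by Lemma~\ref{renorm}. Writing $A_n=\{z\in\C^n:\tfrac{1}{4n^2}K_n(z)>E^V+\eta\}$, I would start from
\[
\proba(A_n)\;=\;\frac{1}{c_n^V}\int_{A_n}\exp\!\Big({-}\tfrac12 K_n(z)-\tfrac12\sum_{i=1}^{2n}V(z_i)-\tfrac12\sum_{i=1}^{2n}\log|z_i-\bar z_i|^{-1}\Big)dz,
\]
and use the pointwise bound $e^{-\frac12 K_n(z)}\leq e^{-2n^2(E^V+\eta)}$, valid on $A_n$ by definition. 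Pulling this factor out and enlarging the domain of integration to all of $\C^n$ is the first move.

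After that, using the conjugate invariance of $V$ and our convention $z_{n+i}=\bar z_i$ (which yields $\sum_{i=1}^{2n}V(z_i)=2\sum_{i=1}^{n}V(z_i)$, and similarly for the $\log|z_i-\bar z_i|^{-1}$ term), the leftover integrand factorises into a product of identical single-variable integrals:
\[
\int_{\C^n}\prod_{i=1}^{n} e^{-V(z_i)}|z_i-\bar z_i|\,dz_1\cdots dz_n\;=\;\left(\int_\C e^{-V(z)}|z-\bar z|\,dz\right)^{\!n}\;=:\;C_V^{\,n},
\]
a quantity that is finite by the growth hypothesis on $V$. Combined with $1/c_n^V\leq \exp(2n^2 E^V+o(n^2))$ from Lemma~\ref{renorm}, this gives
\[
\proba(A_n)\;\leq\;\exp\!\bigl(-2n^2(E^V+\eta)+2n^2 E^V+n\log C_V+o(n^2)\bigr)\;=\;\exp\bigl(-2n^2\eta+o(n^2)\bigr),
\]
because $n\log C_V=o(n^2)$. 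For any $0<\varepsilon<\eta$ and $n$ large enough the $o(n^2)$ remainder is smaller than $2(\eta-\varepsilon)n^2$, and the stated bound $\proba(A_n)\leq e^{-2\varepsilon n^2}$ follows.

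The only delicate step I anticipate is the finiteness of $C_V$. The minimal bound $e^{-V(z)}\leq (|z|^2+1)^{-(\delta+1)}$ coming from the hypothesis makes $\int_\C e^{-V(z)}|z-\bar z|\,dz$ converge as soon as $\delta>\tfrac12$, which in particular covers the case $V(z)=|z|^2$ of main interest for the paper. To handle the full range $\delta>0$ I would refine the argument by splitting $e^{-\frac12 K_n}=e^{-\gamma K_n}\cdot e^{-(\frac12-\gamma)K_n}$ for a small parameter $\gamma\in(0,\tfrac12)$, applying the event bound only to the first factor (which still produces a concentration contribution $e^{-4\gamma n^2(E^V+\eta)}$) and keeping $e^{-(\frac12-\gamma)K_n}$ inside the integral: the piece $-(\tfrac12-\gamma)(2n-1)\sum V(z_i)$ hidden inside $K_n$ then supplies as much extra polynomial decay at infinity as needed to ensure convergence of the factorised single-variable integral, at the cost of replacing $-2\eta n^2$ by $-4\gamma\eta n^2$ in the final exponent (which still dominates $-2\varepsilon n^2$ provided $\gamma$ is taken larger than $\varepsilon/(2\eta)$). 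Showing that this modified integral contributes only $e^{o(n^2)}$ — essentially a partition-function asymptotic for a rescaled Coulomb gas in the spirit of Lemma~\ref{renorm} — is the step that would require the most careful bookkeeping.
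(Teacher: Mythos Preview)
Your core argument---bounding $e^{-\frac12 K_n}$ on the event $A_n$, pulling the constant out, factorising the remaining integral as $C_V^n$ with $C_V=\int_\C e^{-V(z)}|z-\bar z|\,dz$, and then invoking Lemma~\ref{renorm}---is exactly the paper's proof, line for line.

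Where you go further is in questioning the finiteness of $C_V$. The paper simply asserts that $a=\int_\C e^{-V(x)}|x-\bar x|\,dx$ is finite ``by the hypothesis $V(x)\geq (1+\delta)\log(|x|^2+1)$'', without further comment. You are right to flag this: the growth bound gives $e^{-V(x)}|x-\bar x|\lesssim |x|(|x|^2+1)^{-(1+\delta)}$, which in polar coordinates behaves like $r^{-2\delta}$ at infinity, so convergence needs $\delta>\tfrac12$. For the application to $V(z)=|z|^2$ (and indeed for any $V$ growing faster than logarithmically) there is no problem, but in the full generality stated the paper's claim is not justified as written. Your proposed splitting $e^{-\frac12 K_n}=e^{-\gamma K_n}e^{-(\frac12-\gamma)K_n}$ is a natural repair; note however that once the interaction term $-(\tfrac12-\gamma)\sum_{i\neq j}\log|z_i-z_j|^{-1}$ is kept inside, the integral no longer factorises, and the estimate you need is essentially an \emph{upper} bound of the form $\log \tilde c_n\leq o(n^2)$ for a rescaled partition function---which is the easy direction of a Lemma~\ref{renorm}-type estimate, so your sketch should go through without serious difficulty.
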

\begin{proof}Set $
A_n:= \{x\in\C^n \,\big|\, \frac{1}{4n^2}K_n(x)> E^V+\eta\}.$
We have,
\begin{align*}
\proba\left(z^{(n)}\in A_n\right)
&\leq \frac{1}{c_n^V}e^{-2n^2(E^V+\eta)}\int e^{-\frac{1}{2}\big(\sum_{i=1}^{2n}V(z_i)+\sum_{i=1}^{2n}\log |z_i-\bar z_i|^{-1}\big)} d z_1\cdots d z_n \\
&=\frac{1}{c_n^V}e^{-2n^2(E^V+\eta)} \left(\int e^{-V(x)-\log |x-\bar x|^{-1}}  d x \right)^{n}
\end{align*}
Now, taking the logarithm  of the above inequality, we obtain
\[
\frac{1}{2n^2} \log \proba\left(z^{(n)}\in A_n \right) \leq 
\frac{1}{2n^2} \log \frac{1}{c_n^V}-E^V-\eta +\frac{1}{2n}\log (a),
\]
where $a=\int_{\C}e^{-V(x)}|x-\bar x|dx$ is a finite constant by the hypothesis $V(x)\geq (1+\delta)\log(|x|^2+1)$ for $|x|$ large enough. Now, taking the $\limsup$ and using the estimate of $c_n^V$ given by Lemma \ref{renorm}, we obtain
\[
\limsup_n \frac{1}{2n^2}\log \proba\left(z^{(n)}\in A_n \right)\leq E^V-E^V-\eta <-\varepsilon,
\]
so the result follows.
\end{proof}

\begin{proof}[Proof of Theorem  \ref{thcv14411}]

By Proposition \ref{concentr} and Borel-Cantelli Lemma, we have that almost surely, for all $\eta>0$, and $n$ large enough, $\frac{1}{4n^2}K_n(z^{(n)})\leq E^V+\eta$. Hence, almost surely 
$\limsup_n \frac{1}{4n^2}K_n(z^{(n)})\leq E^V$. By Proposition \ref{convded}, the almost surely convergence of $\mu_{z^{(n)}}$ follows.\end{proof}

\vskip 1.5cm

\section{Proof of Corollary \ref{VP.complexes.25.11.08}:
 identification of $\mu^V$ for $V(z)=|z|^2$} 
 
 \vskip 0.5cm
 
  For $\sigma$ a \pro measure on $\C$ and $x\in\C$, let 
\[
U^\sigma(x):=\int\log|x-y|^{-1} d\sigma(y),
\]
be the  \emph{logarithmic potential} of some measure $\sigma\in\M(\C)$. The equilibrium measure $\mu^V$ can be characterized by the following theorem, see Remark~1.5 p. 28 and Theorem~3.3 p. 44 of \cite{saff}. 
\begin{thm} \label{caracpot}
Let $\sigma\in\M(\C)$, be a compactly supported \pro measure on $\C$ \st $I(\sigma)<\infty$.   Then we have $\sigma=\mu^V$ if and only if there exists a constant $l$ such that
\begin{enumerate}
\item 
$2U^\sigma(x)+V(x)=l$, $\mu^V$-a.e. on the support of $\sigma$
\item
$2U^\sigma(x)+V(x)\geq l$, $\mu^V$-a.e. 
\end{enumerate}
\end{thm}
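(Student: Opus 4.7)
The plan is to prove both directions of this variational (Frostman-type) characterization using a standard perturbation argument for necessity and an energy-decomposition argument for sufficiency.

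For the necessity direction, assume $\sigma=\mu^V$ is the unique minimizer of $I$ given by Saff--Totik Theorem 1.3. For any compactly supported probability measure $\nu$ with $I(\nu)<\infty$, form the convex combination $\mu_t:=(1-t)\mu^V+t\nu$ for $t\in[0,1]$. Since $\mu^V$ minimizes $I$, the right derivative of the quadratic polynomial $t\mapsto I(\mu_t)$ at $t=0$ must be non-negative. A direct expansion gives
\[
\int \bigl(2U^{\mu^V}+V\bigr)\,d\nu \;\geq\; 2\!\int U^{\mu^V}\,d\mu^V \,+\, \int V\,d\mu^V \;=:\; l.
\]
Choosing $\nu$ to be normalized restrictions of $\mu^V$ to Borel subsets of $\mathrm{supp}(\mu^V)$ forces the inequality to be an equality $\mu^V$-a.e.\ on $\mathrm{supp}(\mu^V)$, giving (1). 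For (2), test against measures concentrating on small balls around a point $x$ and use lower semicontinuity of $U^{\mu^V}+V$ to obtain the pointwise inequality (outside a polar exceptional set, hence $\mu^V$-a.e.).

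For the sufficiency direction, assume $\sigma$ satisfies (1) and (2) with constant $l$, and is compactly supported with $I(\sigma)<\infty$. Let $\mu$ be any compactly supported probability measure with $I(\mu)<\infty$ and set $\tau:=\mu-\sigma$, a signed measure of total mass zero. Writing $\mu=\sigma+\tau$ and expanding the double integral yields
\[
I(\mu)-I(\sigma) \;=\; I_L(\tau) \;+\; \int\bigl(2U^\sigma+V\bigr)\,d\tau,
\]
where $I_L(\tau):=\iint \log|x-y|^{-1} d\tau(x)d\tau(y)$. Because $\tau$ has total mass zero, the integrand may be replaced by $2U^\sigma+V-l$ without changing the integral. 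By (1) its integral against $\sigma$ vanishes, while by (2) its integral against $\mu$ is non-negative, since any $\mu$ with finite logarithmic energy does not charge polar sets, so the $\mu^V$-a.e.\ inequality transfers to $\mu$-a.e. Hence $I(\mu)-I(\sigma)\geq I_L(\tau)\geq 0$, and by uniqueness of the minimizer, $\sigma=\mu^V$.

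The main technical obstacle is the classical positivity fact $I_L(\tau)\geq 0$ for every compactly supported signed measure $\tau$ of total mass zero whose total variation has finite logarithmic energy. This is not algebraically obvious, since $-\log|\cdot|$ is not a globally non-negative kernel. It is proved either via a Fourier representation of the kernel (the measure $\tau$ having zero mass kills the singular constant and leaves a non-negative spectral density), or by the superposition formula expressing $-\log|x-y|$ as an integral over truncated Newtonian-type kernels followed by Fubini. Granting this input, the computation above and the uniqueness of $\mu^V$ close both directions of the equivalence.
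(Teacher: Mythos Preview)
The paper does not prove this theorem: it is quoted directly from Saff--Totik (Remark~I.1.5 and Theorem~I.3.3), so there is no ``paper's own proof'' to compare against. Your sketch is essentially the standard argument one finds in that reference (or in Deift's lecture notes): the necessity direction via first-variation of $t\mapsto I((1-t)\mu^V+t\nu)$ at $t=0$, and the sufficiency direction via the quadratic expansion $I(\mu)-I(\sigma)=I_L(\mu-\sigma)+\int(2U^\sigma+V-l)\,d(\mu-\sigma)$ together with the positivity of the logarithmic energy on compactly supported signed measures of total mass zero. You have correctly isolated the one genuinely non-trivial input, namely $I_L(\tau)\ge 0$ for such $\tau$, and your two suggested proofs of it (Fourier side, or superposition of truncated kernels) are the standard ones.

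One point worth flagging: the paper's statement writes ``$\mu^V$-a.e.'' in both conditions, whereas the classical formulation (and the one your argument actually needs) is \emph{quasi-everywhere}. In the sufficiency step you want $\int(2U^\sigma+V-l)\,d\mu\ge 0$ for an arbitrary competitor $\mu$ with $I(\mu)<\infty$; this follows because such a $\mu$ does not charge polar sets, so a q.e.\ inequality holds $\mu$-a.e. A literal ``$\mu^V$-a.e.'' hypothesis would not transfer to $\mu$-a.e.\ in general. Your proof implicitly (and correctly) reads the hypothesis as q.e., which is what Saff--Totik actually prove; just be aware that the transfer step relies on this stronger reading. Similarly, in the necessity direction your pointwise bound $2U^{\mu^V}(x)+V(x)\ge l$ comes from the \emph{super}-mean-value inequality for the superharmonic function $U^{\mu^V}$ applied to averages over small disks, rather than from lower semicontinuity alone; the conclusion is the same but the mechanism is worth naming precisely.
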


Let us now apply this theorem to find out 
the equilibrium measure of the quadratic potential $V(z)=|z|^2$ is  the uniform measure $\mu$ on $D(0,1)$. We have to calculate the logarithmic potential of $\mu$.
 
 Firstly, the mean-value property of harmonic functions implies the following useful classical lemma.
\begin{lem} \label{meanclassical}
For all $x\in\C$,
\[
\int_{-\pi}^{\pi} \log|x-re^{i\theta}| d\theta 
=
\begin{cases}
2\pi \log r,  & \text{if $|x|\leq r$,} \\
2\pi \log |x|, & \text{if $|x|>r$.}
\end{cases}
\]
\end{lem}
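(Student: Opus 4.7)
The plan is to apply the mean value property of harmonic functions to the function $z\mapsto\log|x-z|$, which is harmonic on $\C\setminus\{x\}$, splitting according to whether $x$ lies outside, inside, or on the circle of radius $r$.

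When $|x|>r$, the singularity $z=x$ lies outside $\overline{D}(0,r)$, so $z\mapsto\log|x-z|$ is harmonic on a neighborhood of this closed disk. The mean value property on the circle $|z|=r$ then yields immediately
\[
\frac{1}{2\pi}\int_{-\pi}^{\pi}\log|x-re^{i\theta}|\,d\theta=\log|x-0|=\log|x|,
\]
which is the second line of the claimed formula.

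When $|x|<r$, the direct argument fails since $x\in D(0,r)$. I would sidestep this by factoring
\[
|x-re^{i\theta}|=r\,\bigl|1-(x/r)e^{-i\theta}\bigr|,
\]
and, setting $w=x/r$ and substituting $\phi=-\theta$, reducing the claim to
\[
\int_{-\pi}^{\pi}\log|1-we^{i\phi}|\,d\phi=0, \qquad |w|<1.
\]
This follows from a second application of the mean value property: the map $z\mapsto 1-wz$ is holomorphic and nonvanishing on a neighborhood of $\overline{D}(0,1)$ (its only zero $1/w$ has modulus strictly greater than $1$), so it admits a holomorphic logarithm there, making $z\mapsto\log|1-wz|$ harmonic on that neighborhood. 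Its average over the unit circle equals its value at $0$, namely $\log 1=0$. Adding back the $2\pi\log r$ produced by the factorization gives $2\pi\log r$, matching the first line of the formula.

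The boundary case $|x|=r$ follows without additional work since the two formulas already agree ($2\pi\log r=2\pi\log|x|$), and the integrand has only an integrable logarithmic singularity at the unique $\theta_0$ with $re^{i\theta_0}=x$. The only mild subtlety is the case $|x|<r$, where the rescaling trick is needed to push the logarithmic singularity of the integrand outside the unit disk so that the mean value property becomes applicable; the rest is textbook harmonic analysis.
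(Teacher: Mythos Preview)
Your proof is correct and follows exactly the approach the paper indicates: the paper simply states that the lemma ``follows from the mean-value property of harmonic functions'' without further details, and your argument fleshes this out in the standard way (direct application when $|x|>r$, the rescaling $|x-re^{i\theta}|=r|1-(x/r)e^{-i\theta}|$ to reduce the case $|x|<r$ to another mean-value computation, and agreement of the two formulas on $|x|=r$).
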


A straightforward computation using the above lemma allows one to compute the logarithmic potential of $\mu$ given in the next proposition.
\begin{prop}
Let $\mu$ be the uniform measure on the unit disk $D(0,1)$. Let $U^\mu$ be the logarithmic potential of $\mu$, \ie $U^\mu(x)=\int \log |x-y|^{-1}\mu(dy)$ for $x\in\C$. Then,
\[
U^\mu(x)=
\begin{cases}
\frac{1}{2}(1-|x|^2),& \text{for $|x|\leq 1$,}\\
-\log |x|,& \text{for $|x|>1$.}
\end{cases}
\]
\end{prop}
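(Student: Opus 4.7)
The plan is a direct calculation using the preceding Lemma \ref{meanclassical} and polar coordinates. Writing $\mu$ as $\frac{1}{\pi}\mathbf{1}_{D(0,1)}\,dy$ and parametrising $y=re^{i\theta}$, I would express
\[
U^\mu(x)=-\frac{1}{\pi}\int_0^1\!\!\int_{-\pi}^{\pi}\log|x-re^{i\theta}|\,r\,d\theta\,dr,
\]
so that the inner integral is exactly the one computed in Lemma \ref{meanclassical}.

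For the outer region $|x|>1$ the situation is trivial: since $r\leq 1<|x|$ throughout, the lemma gives $\int_{-\pi}^{\pi}\log|x-re^{i\theta}|\,d\theta=2\pi\log|x|$ for every $r\in[0,1]$, so
\[
U^\mu(x)=-\frac{1}{\pi}\cdot 2\pi\log|x|\int_0^1 r\,dr=-\log|x|,
\]
which is also exactly what harmonicity of $\log|\cdot|$ off the support would predict.

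For $|x|\leq 1$ one has to split the radial integral at $r=|x|$. On $[0,|x|]$ the lemma yields the constant value $2\pi\log|x|$, and on $[|x|,1]$ it yields $2\pi\log r$. Combining,
\[
U^\mu(x)=-2\log|x|\int_0^{|x|}r\,dr-2\int_{|x|}^1 r\log r\,dr.
\]
The first piece equals $-|x|^2\log|x|$, and integration by parts on the second piece (using $\int r\log r\,dr=\tfrac{r^2}{2}\log r-\tfrac{r^2}{4}$) gives $|x|^2\log|x|-\tfrac{|x|^2}{2}+\tfrac{1}{2}$. Adding the two contributions cancels the $|x|^2\log|x|$ terms and leaves $\tfrac{1}{2}(1-|x|^2)$, as required.

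There is no real obstacle here; the only thing to be mildly careful about is the bookkeeping in the case $|x|\leq 1$, namely remembering to split the $r$-integral at $|x|$ and applying the correct branch of Lemma \ref{meanclassical} on each subinterval. One can additionally sanity-check the answer by noting that both formulas agree on $|x|=1$ (giving $U^\mu=0$), that $U^\mu$ is continuous on $\C$, and that $-\Delta U^\mu$ equals $2\mathbf{1}_{D(0,1)}$ inside the disk and $0$ outside, consistently with $\mu$ having density $\tfrac{1}{\pi}$ on $D(0,1)$ via the Poisson equation $-\Delta U^\mu=2\pi\,d\mu/dx$.
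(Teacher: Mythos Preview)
Your proposal is correct and follows exactly the approach the paper has in mind: the paper simply states that the result is ``a straightforward computation using the above lemma'' (Lemma~\ref{meanclassical}) and gives no further details, while you supply precisely those details via polar coordinates and the radial split at $r=|x|$. Your bookkeeping checks out in both regions.
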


Now, as a direct consequence of Theorem \ref{caracpot},   the equilibrium measure of the weighted logarithmic energy $I$ with potential $V(z)=|z|^2$ is given by the uniform measure $\mu$ on the unit disk of $\C$.

\vskip 1.5cm

\section{Proof of Theorem \ref{VP.quat.25.11.08}} 
 
 \vskip 0.5cm
 
 
 Before   providing a formal proof of Theorem \ref{VP.quat.25.11.08}, let us outline  this proof. We will at first consider the random \pro  measure \be\label{mes_20041118h21}\frac{1}{n}\sum_{i=1}^n \delta_{(z_{n,i},u_i)},\ee where the $z_{n,i}$'s are the complex right eigenvalues of $X(n)$ with positive imaginary part and the $u_i$'s are i.i.d.   variables, independent of the $z_{n,i}$'s, and  whose distribution $\restriction{\mathcal U}{\Sp(\Hq)}$ is the uniform law   on the sphere $\Sp(\Hq)$. We know, by Corollary \ref{VP.complexes.25.11.08}, that $\frac{1}{n}\sum_{i=1}^n \delta_{z_{n,i}}$ converges  towards  $\restriction{\mathcal U}{D^+_\C(0,1)}$, the uniform distribution on the intersection of the unit circle with the upper half-plane. We also know, by the Law of Large Numbers, that $\frac{1}{n}\sum_{i=1}^n \delta_{u_{i}}$ converges towards   $\restriction{\mathcal U}{\Sp(\Hq)}$. Hence by independence and exchangeability of the samples $(z_{n,1}, \ldots, z_{n,n})$ and $(u_1, \ldots, u_n)$, the measure of \eqref{mes_20041118h21} will converge to $\restriction{\mathcal U}{D^+_\C(0,1)}\otimes\restriction{\mathcal U}{\Sp(\Hq)}$ (this is the content of Lemma \ref{empirique+tensor.25.11.08} below, whose statement seems quite intuitive and well-known, but for which we did not find any reference, hence provide a proof).  To conclude the proof, we next have to prove that the push-forward of $\restriction{\mathcal U}{D^+_\C(0,1)}\otimes\restriction{\mathcal U}{\Sp(\Hq)}$ by the map $(z,u)\mapsto uzu^*$ is the   measure of \eqref{26.11.08.2}. This last step  relies on the identification of the orbits of the action of ${\Sp(\Hq)}$ by conjugation on $\Hq$, which is due to   Brenner  in \cite{Brenner51}. A kind of probabilistic version of Brenner's result is given in Proposition   \ref{26.11.08.1} below. 
\\

So first, we state the    preliminary results that we need, the first of which is a kind of multidimensional and almost sure reformulation of \cite[Th. 1.8, Part 1]{L-BEN}.

\begin{lem}\label{empirique+tensor.25.11.08}Let $p,q$ be two positive integers and $\mu,\nu$ be two  \pro measures on respectively $\R^p,\R^q$.  Let, for each $n$, $$x_{n,1}, \ldots, x_{n,n},y_{n,1}, \ldots, y_{n,n}$$be random variables, the $x_{n,i}$'s taking values in $\R^p$ and the $y_{n,j}$'s taking values in $\R^q$ \st 
\begin{itemize}\item[(i)] for each $n$,  $\{x_{n,1}, \ldots, x_{n,n}\}$ and $\{y_{n,1}, \ldots, y_{n,n}\}$ are independent sets of random variables,
\item[(ii)]  for each $n$,  the laws of the random vectors $(x_{n,1}, \ldots, x_{n,n})$ and $(y_{n,1}, \ldots, y_{n,n})$ are invariant under the actions of the symetric group $\mathcal S_n$,
\item[(iii)] the random \pro measures $$\mu_n:=\ff{n}\sum_{i=1}^n\delta_{x_{n,i}},\quad \nu_n:=\ff{n}\sum_{i=1}^n\delta_{y_{n,i}}$$ converge almost surely respectively to $\mu,\nu$ 	as $n$ tends to infinity.
\end{itemize}
Then the random \pro measure $$\rho_n:=\ff{n}\sum_{i=1}^n\delta_{(x_{n,i},y_{n,i})}$$converges almost surely to $\mu\otimes\nu$ as $n$ tends to infinity.
\end{lem}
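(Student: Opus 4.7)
The plan is to reduce the weak convergence $\rho_n \Rightarrow \mu\otimes\nu$ to a family of scalar almost sure convergences indexed by pairs of bounded continuous test functions, and then to handle each such scalar statement by combining the assumed a.s. convergence of the marginals with a conditional concentration argument based on exchangeability.

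The reduction is as follows. Since $\mu$ and $\nu$ are tight and $\mu_n\to\mu$, $\nu_n\to\nu$ a.s., the sequence $(\rho_n)$ is almost surely tight. Pick countable dense subfamilies of $C_b(\R^p)$ and $C_b(\R^q)$. Stone--Weierstrass (on an exhausting sequence of compact sets) together with the tightness of $\rho_n$ then shows that it suffices to prove, for each pair $(g,h)$ in these families, that
\[
T_n(g,h):=\frac{1}{n}\sum_{i=1}^{n}g(x_{n,i})\,h(y_{n,i}) \xrightarrow[n\to\infty]{\rm a.s.}\Big(\int g\,\ud\mu\Big)\Big(\int h\,\ud\nu\Big).
\]

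Fix such a pair $(g,h)$ and set $\bar g_n=\int g\,\ud\mu_n$, $\bar h_n=\int h\,\ud\nu_n$. Condition on the $\sigma$-algebra $\mathcal F_n$ generated by the unordered multisets $\{x_{n,i}\}_i$ and $\{y_{n,i}\}_i$. By the independence assumption (i) and the permutation invariance (ii), once we fix enumerations $(a_i)$ and $(b_i)$ of these multisets, the conditional law of $(x_{n,i},y_{n,i})_{i=1}^n$ coincides with that of $(a_i,b_{\pi(i)})_{i=1}^n$ where $\pi$ is a uniformly chosen random permutation of $\{1,\dots,n\}$. In particular $\E[T_n(g,h)\mid \mathcal F_n]=\bar g_n\bar h_n$, so that $R_n:=T_n(g,h)-\bar g_n\bar h_n$ is a centered function of $\pi$ alone. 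The map $\pi\mapsto R_n(\pi)$ has bounded differences under transpositions, since each transposition changes $R_n$ by at most $4\|g\|_\infty\|h\|_\infty/n$. By the subgaussian concentration inequality for uniform permutations (Maurey's inequality, equivalently McDiarmid for permutations), there exists a universal constant $c>0$ such that
\[
\P\big(|R_n|>\varepsilon\mid \mathcal F_n\big)\le 2\exp\!\Big(-\frac{c\,n\,\varepsilon^{2}}{\|g\|_\infty^{2}\|h\|_\infty^{2}}\Big).
\]
This bound is summable in $n$, so integrating in $\mathcal F_n$ and applying Borel--Cantelli yields $R_n\to 0$ a.s. Combined with $\bar g_n\bar h_n\to(\int g\,\ud\mu)(\int h\,\ud\nu)$ a.s. (from hypothesis (iii)), this gives the required convergence of $T_n(g,h)$, and hence, by the preceding reduction, $\rho_n\Rightarrow\mu\otimes\nu$ a.s.

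The expected obstacle is the concentration step. The bounded-differences property itself is elementary, but upgrading it to the summable subgaussian tail requires invoking a non-trivial concentration inequality for a uniform random permutation. If one prefers to avoid quoting Maurey's inequality, a self-contained alternative is to compute the fourth conditional moment $\E[R_n^4\mid\mathcal F_n]$ by expanding over the four-point marginals of a uniform permutation (which are elementary but tedious to sum), check that it is $O(n^{-2})$, and conclude again by Markov's inequality and Borel--Cantelli. Apart from this step, the proof is a routine tightness and Stone--Weierstrass reduction followed by the conditioning argument based on exchangeability and independence.
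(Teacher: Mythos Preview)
Your proof is correct and follows the same overall architecture as the paper's: a Stone--Weierstrass reduction to products $g\otimes h$, subtraction of the product of the empirical marginal means, and then a concentration/moment bound plus Borel--Cantelli on the remainder $R_n$.

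The genuine difference is in how the remainder is controlled. You condition on the two unordered multisets, observe that the pairing is then governed by a uniform random permutation, and invoke Maurey's bounded-differences inequality on $S_n$ to obtain a subgaussian tail. The paper instead centers by the empirical means, setting $a_{n,i}=g(x_{n,i})-\bar g_n$ and $b_{n,i}=h(y_{n,i})-\bar h_n$, and computes the \emph{unconditional} fourth moment of $\sum_i a_{n,i}b_{n,i}$ directly. Independence (i) factorizes each mixed moment into an $\alpha$-part and a $\beta$-part, and the paper exploits the almost-sure identities $\sum_i a_{n,i}=\sum_i b_{n,i}=0$ to derive algebraic relations (e.g.\ $A_n^3\alpha_{n,2,1,1}=-A_n^2(\alpha_{n,3,1}+\alpha_{n,2,2})$) forcing the dangerous terms $\alpha_{n,1,1,1,1}$ and $\alpha_{n,2,1,1}$ to be $O(n^{-1})$, whence the fourth moment is $O(n^2)$. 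Your ``self-contained alternative'' (the conditional fourth moment over the uniform permutation) is essentially this computation reframed; the paper's centering trick is what makes it tractable without conditioning. Your route is cleaner conceptually, since it names the permutation structure explicitly, but it imports a nontrivial concentration inequality; the paper's route is entirely elementary but more combinatorial. Two minor points of phrasing: $C_b(\R^p)$ is not separable, so your countable family should live in $C_c$ (which is what Stone--Weierstrass on compacts actually gives you); and strictly speaking what coincides conditionally is the law of the \emph{multiset} of pairs $\{(x_{n,i},y_{n,i})\}$, not the ordered tuple, but since $T_n$ is symmetric this is all you use.
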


\begin{proof}
It suffices to prove that for any compactly supported real valued continuous function $f$ defined on $\R^{p+q}$, $\int f(t) d \rho_n(t)$ tends almost surely to $\int f(t) d \mu\otimes \nu(t)$ as $n$ tends to infinity. By Stone-Weierstrass theorem, it suffices to prove it when $f=g\otimes h$, with $g,h$  compactly supported real valued continuous functions defined respectively on $\R^{p}, \R^{q}$. Moreover, since by (iii), it is obvious when $g$ or $h$ is a constant function, one can suppose that $$\int g(t) d \mu(t)=\int h(t) d \nu(t)=0.$$ Thus we have to prove that $\ff{n}\sum_{i=1}^ng(x_{n,i})h(y_{n,i})$ tends almost surely to $0$. Let us define, for all $n$, for all $i=1, \ldots, n$, $$a_{n,i}=g(x_{n,i})-\ff{n}\sum_{j=1}^n g(x_{n,j}),\quad b_{n,i}=h(y_{n,i})-\ff{n}\sum_{j=1}^n h(y_{n,j}).$$ Since $\ff{n}\sum_{j=1}^n g(x_{n,j})$ and $\ff{n}\sum_{j=1}^n h(y_{n,j})$ converge almost surely to zero and the functions $g,h$ are bounded, it suffices to prove that $\ff{n}\sum_{i=1
 }^na_{n,i}b_{n,i}$ converges almost surely to zero. Note that the advantage of working with the $a_{n,i}$'s and the $b_{n,i}$'s instead of working with the $ g(x_{n,i})$'s and the $ h(y_{n,i})$'s is that for all $n$, one has almost surely \be\label{sum.null.25.11.08} \sum_{i=1}^n a_{n,i}=\sum_{i=1}^n b_{n,i}=0.\ee

We claim that the fourth moment of $\sum_{i=1}^na_{n,i}b_{n,i}$ is $\operatorname{O}(n^2)$. Let us prove it. We have  \be\label{1658.25.11.08}\E\left[\left(\sum_{i=1}^na_{n,i}b_{n,i}\right)^4\right]=\sum_{1\leq i,j,k,l\leq n}\E(a_{n,i}a_{n,j}a_{n,k}a_{n,l})\E(b_{n,i}b_{n,j}b_{n,k}b_{n,l}).\ee Note that by  the hypothesis (ii), each term in the sum of the  right hand side of \eqref{1658.25.11.08} only depends on the partition of $\{1,2,3,4\}$ defined by $(i,j,k,l)$: let us define \beq \alpha_{n,1,1,1,1}&=& \E(a_{n,i}a_{n,j}a_{n,k}a_{n,l})\quad \textrm{ for $i,j,k,l$ pairwise distinct,}\\ 
\alpha_{n,2,1,1}&=& \E(a_{n,i}^2a_{n,k}a_{n,l})\quad \textrm{ for $i,k,l$ pairwise distinct,}\\ 
\alpha_{n,2,2}&=& \E(a_{n,i}^2 a_{n,k}^2)\quad \textrm{ for $i\neq k$,}\\ 
\alpha_{n,3,1}&=& \E(a_{n,i}^3a_{n,l})\quad \textrm{ for $i\neq l$,}\\ 
\alpha_{n,4}&=& \E(a_{n,i}^4)\quad \textrm{ for $i\in\{1, \ldots, n\}$}\eeq and let us define the $\beta_{n,\cdot}$'s in the same way with the $b_{n,i}$'s  instead of the $a_{n,i}$'s.
Then by  the hypothesis (ii), with the notation $A_n^k=n(n-1)\cdots (n-k+1)$ for all $k$, 
\begin{multline*}
\E\left[\left(\sum_{i=1}^na_{n,i}b_{n,i}\right)^4\right]= A_n^4 \alpha_{n,1,1,1,1}\beta_{n,1,1,1,1}+6A_n^3\alpha_{n,2,1,1}\beta_{n,2,1,1}\\
+4A_n^2(\alpha_{n,2,2}\beta_{n,2,2}+\alpha_{n,3,1}\beta_{n,3,1})+n \alpha_{n,4} \beta_{n,4}.
\end{multline*}
Note that since $g$ and $h$ are bounded, the $\alpha_{n,\cdot}$'s and the $\beta_{n,\cdot}$'s are all $\operatorname{O}(1)$. Thus to prove the claim, it suffices to prove that $\alpha_{n,1,1,1,1}=\operatorname{O}(n^{-1})$,  $\alpha_{n,2,1,1}=\operatorname{O}(n^{-1})$ and that the same holds for the $\beta$'s. We shall only treat the case of the $\alpha$'s. Passing \eqref{sum.null.25.11.08}  to the fourth power and taking the expectation, we get $$ A_n^4 \alpha_{n,1,1,1,1}+6A_n^3\alpha_{n,2,1,1}+4A_n^2(\alpha_{n,2,2}+\alpha_{n,3,1})+n \alpha_{n,4} =0,$$from which it follows that $\alpha_{n,1,1,1,1}=\operatorname{O}(n^{-1})$. In the same way, it follows from \eqref{sum.null.25.11.08} that 
$$\sum_{\substack{i,j,k\\ \textrm{pairwise distinct}}}a_{n,i}^2a_{n,j}a_{n,k}=\sum_{i\neq j}a_{n,i}^2a_{n,j}(-a_{n,i}-a_{n,j}),$$which gets, by integration, $A_n^3 \alpha_{n,2,1,1}=-A_n^2(\alpha_{n,3,1}+\alpha_{n,2,2}).$ Hence $\alpha_{n,2,1,1}=\operatorname{O}(n^{-1}).$ 

Thus we have proved that the fourth moment of $\sum_{i=1}^na_{n,i}b_{n,i}$ is $\operatorname{O}(n^2)$. 
By Markov's inequality and Borel-Cantelli's lemma, $\ff{n}\sum_{i=1}^na_{n,i}b_{n,i}$ tends almost surely to zero. 
\end{proof}

\begin{prop}\label{26.11.08.1}Fix $z_0\in \Hq$ and let $u$ be a Haar distributed random element of $\Sph(\Hq)$.   Then $uz_0u^*$ has the law of $\Re(z_0)+|\Im(z_0)|S$, for $S$ uniformly distributed on the unit sphere of   $\Hq_0:=\{q\in \Hq\ste \Re(q)=0\}$. 
\end{prop}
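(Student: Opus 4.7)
The plan is to reduce the statement to the standard fact that conjugation by a Haar-distributed element of $\Sph(\Hq)$ implements a Haar-distributed rotation of the three-dimensional real vector space $\Hq_0 \simeq \R^3$, and then to use invariance to conclude that a fixed unit vector is sent to a uniformly distributed point of the unit sphere.

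First, I would dispatch the trivial case $\Im(z_0)=0$, and henceforth assume $\Im(z_0)\neq 0$. For any $u\in\Sph(\Hq)$, since $\Re(z_0)\in\R$ is central and $uu^*=1$, one has $uz_0u^*=\Re(z_0)+u\,\Im(z_0)\,u^*$. A short computation gives $(u\Im(z_0)u^*)^*=-u\Im(z_0)u^*$, so the second term lies in $\Hq_0$, and from the multiplicativity $|qq'|=|q||q'|$ one deduces $|u\Im(z_0)u^*|=|\Im(z_0)|$. Thus the proposition reduces to proving that, setting $v_0:=\Im(z_0)/|\Im(z_0)|$, the random vector $uv_0u^*$ is uniformly distributed on the unit sphere of $\Hq_0$.

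I would then introduce the map $\varphi:\Sph(\Hq)\to\mathrm{GL}_\R(\Hq_0)$ given by $\varphi(u)(q)=uqu^*$. By the preceding paragraph, $\varphi(u)$ is a norm-preserving linear endomorphism of $\Hq_0$, hence lies in $\mathrm{O}(\Hq_0)$; continuity of $\varphi$, connectedness of $\Sph(\Hq)=\Sph^3$ and $\varphi(1)=\mathrm{Id}$ force $\varphi(u)\in\mathrm{SO}(\Hq_0)$. Writing $u=\cos(\theta/2)+\sin(\theta/2)w$ with $w$ a unit vector of $\Hq_0$, a direct computation identifies $\varphi(u)$ with the rotation of angle $\theta$ around the axis $\R w$, so that $\varphi$ is surjective onto $\mathrm{SO}(\Hq_0)$ and its image acts transitively on the unit sphere of $\Hq_0$.

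To conclude, let $\nu$ denote the law of $uv_0u^*$. For any $R\in\mathrm{SO}(\Hq_0)$, pick $u_R\in\Sph(\Hq)$ with $\varphi(u_R)=R$; by left-invariance of the Haar measure on $\Sph(\Hq)$, the random element $u_Ru$ has the same law as $u$, so $R(uv_0u^*)=\varphi(u_Ru)(v_0)$ has law $\nu$. Hence $\nu$ is $\mathrm{SO}(\Hq_0)$-invariant, and by transitivity of the $\mathrm{SO}(\Hq_0)$-action on the unit sphere of $\Hq_0$ it is the uniform probability measure there. The only mildly delicate step is the explicit identification of $\varphi$ with the standard double cover $SU(2)\to SO(3)$; every other step is routine.
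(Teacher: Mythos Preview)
Your proof is correct and follows essentially the same strategy as the paper: reduce to the conjugation action on $\Hq_0$, show transitivity on the unit sphere there, and conclude by invariance of Haar measure. The only presentational differences are that the paper cites Brenner's theorem (via \cite{z97}) for the transitivity instead of computing the $SU(2)\to SO(3)$ cover explicitly, and it packages your final invariance argument into a standalone lemma about transitive actions of compact groups preserving a probability measure.
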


To prove the proposition, we shall use the following lemma. 

\begin{lem}\label{unf.25.11.08}
Let $E$ be a metric space, $\mu$ a probability measure on $E$, and $G$ a compact group such that $G$ acts transitively on $E$ and preserves $\mu$. Let $x\in E$. Then the push-forward of the Haar measure on $G$ by the map $g\mapsto g\cdot x$ coincides with $\mu$.
\end{lem}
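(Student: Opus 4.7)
The plan is to show that the push-forward $\nu$ of Haar measure by $g \mapsto g \cdot x$ coincides with $\mu$ by exploiting uniqueness of $G$-invariant probability measures under a transitive action. First I would verify that $\nu$ is itself $G$-invariant: for any Borel $A \subset E$ and any $h \in G$, the left-invariance of Haar measure gives
\[
\nu(h^{-1}A) = \int_G \one_{A}(h\cdot g\cdot x)\, d\mathrm{Haar}(g) = \int_G \one_A(g\cdot x)\, d\mathrm{Haar}(g) = \nu(A).
\]
So both $\mu$ and $\nu$ are $G$-invariant probability measures on $E$.

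To conclude it suffices to prove that, under the hypotheses, there is only one such measure. The key step is the classical Fubini argument. For any Borel $A \subset E$, using the invariance of $\mu$ under each $g$ and the fact that Haar is a probability measure,
\[
\mu(A) = \int_G \mu(g^{-1}A)\, d\mathrm{Haar}(g) = \int_G \int_E \one_A(g\cdot y)\, d\mu(y)\, d\mathrm{Haar}(g).
\]
By Fubini (everything is bounded), this equals $\int_E \bigl(\int_G \one_A(g\cdot y)\, d\mathrm{Haar}(g)\bigr) d\mu(y)$.

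Now I would use transitivity: for each $y \in E$, pick $h_y \in G$ with $y = h_y \cdot x$. Then $g\cdot y = (g h_y)\cdot x$, and the right-invariance of Haar measure yields
\[
\int_G \one_A(g\cdot y)\, d\mathrm{Haar}(g) = \int_G \one_A(g\cdot x)\, d\mathrm{Haar}(g) = \nu(A),
\]
a constant independent of $y$. Plugging back in gives $\mu(A) = \nu(A)\cdot \mu(E) = \nu(A)$, which proves the lemma. The only potential subtlety I foresee is the measurability of the map $(y,g) \mapsto g \cdot y$ needed for Fubini and the measurable selection $y \mapsto h_y$; the first is automatic from the continuity of the action on a metric space, while the second can be bypassed by noting that the inner integral $\int_G \one_A(g\cdot y)\, d\mathrm{Haar}(g)$ is manifestly independent of the choice of $h_y$, so one never needs to select $h_y$ measurably — the identity is pointwise in $y$.
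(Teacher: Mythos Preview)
Your proof is correct and follows essentially the same route as the paper: both arguments use Fubini together with the fact that, by transitivity and right-invariance of Haar measure, the inner integral $\int_G \one_A(g\cdot y)\,d\mathrm{Haar}(g)$ is independent of $y$, hence equals $\nu(A)$. Your opening verification that $\nu$ is $G$-invariant is not actually used in the remainder of your argument and could be omitted, but it does no harm.
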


\begin{proof} Let $f$ be a bounded measurable function on $E$ and let $\mu_G$ denote the  Haar measure on $G$. For all $y\in E$ there exists $g_y\in G$ such that $x=g_y\cdot y$. As a consequence, by right-invariance of $\mu_G$, $\int_G f(g\cdot y)d\mu_G(g)   $ does not depend on $y$. So 
 \beq \int_G f(g\cdot x) d\mu_G (g)&=&\int_{E}  \int_G f(g \cdot  y)d\mu_G(g) d\mu (y) \\ & =&\int_G  \int_{E}f(g\cdot y)d\mu (y) d\mu_G(g) \\
  & =&\int_G  \int_{E}f(  y)d\mu (y) d\mu_G(g) \\ & =&\int_{E} f(y)d\mu (y),
\eeq
which proves that $g\cdot x$ is distributed according to $\mu$.
\end{proof}

\begin{proof}[Proof of Proposition \ref{26.11.08.1}]Let us endow $\Hq$ with its canonical Euclidian structure (for which $(1, \iq,\jq,\kq)$ is an orthogonal basis). Then the action of the group of quaternions with norm one on $\Hq$ defined by $u\!\cdot\! z:=uzu^*$ is linear, norm preserving (since the norm is multiplicative on $\Hq$) stabilizes $\R$ and thus also the orthogonal of $\R$ in $\Hq$, namely the space of quaternions with null real part. Moreover, by Brenner's Theorem (see e.g. \cite[Th. 2.2]{z97}), the action induced on the unit sphere of this subspace is transitive, thus the proposition follows from the previous lemma.
\end{proof}

\begin{proof}[Proof of Theorem \ref {VP.quat.25.11.08}.]
Let, for each $n$,  $z_{n,1}, \ldots, z_{n,2n}$ be the complex right eigenvalues of $X(n)$, ordered in such a way that for all $i=1, \ldots, n$, $z_{n,n+i}=\overline{z_{n,i}}$ and the imaginary part of $z_{n,i}$ is positive and that the joint law of $(z_{n,1}, \ldots, z_{n,n})$ is invariant under the action of the symmetric group $\mathcal S_n$.  Then the conjugation classes of its right spectrum are 
\[
C_{n,1}:=\{uz_{n,1}u^*\ste u\in \Hq, |u|=1\},\ldots, C_{n,n}:=\{uz_{n,n}u^*\ste u\in \Hq, |u|=1\},
\] 
and $c_{n,1}, \ldots, c_{n,n}$ can be defined by $c_{n,1}=u_1z_{n,1}u_1^*, \ldots, c_{n,n}=u_nz_{n,n}u_n^*$ for $(u_i)_{i\geq 1}$ a family of independent random variables with uniform distribution on the group of the quaternions with norm one, \st $\{z_{n,1}, \ldots, z_{n,n}\}$ and $\{u_i\ste i\geq 1\}$ are independent. 

Note that the random \pro measure $\ff{n}\sum_{i=1}^n \delta_{z_{n,i}}$ is the push-forward of   $\ff{2n}\sum_{i=1}^{2n}\delta_{z_{n,i}}$ by the map $z\in \C\mapsto \Re(z)+\iq |\Im(z)|.$ Thus by Corollary \ref{VP.complexes.25.11.08}, it converges almost surely, as $n$ tends to infinity, to the push-forward of the uniform law on the unit circle of the complex plane by this map, i.e. to the uniform law on the intersection of the unit circle with the upper half-plane, denoted by $\restriction{\mathcal U}{D^+_\C(0,1)}$. Moreover, by the strong law of large numbers, $$\ff{n}\sum_{i=1}^n \delta_{u_i}$$ converges almost surely, as $n$ tends to infinity, to the uniform law on  the group of the quaternions with norm one, denoted by $\restriction{\mathcal U}{\Sp(\Hq)}$.

So by Lemma \ref{empirique+tensor.25.11.08},  as $n$ tends to infinity,  the random \pro measure $$\ff{n}\sum_{i=1}^n \delta_{(z_{n,i},u_i)}$$
converges almost surely to $\restriction{\mathcal U}{D^+_\C(0,1)}\otimes \restriction{\mathcal U}{\Sp(\Hq)}$. Thus the random \pro measure $$\ff{n}\sum_{i=1}^n \delta_{c_{n,i}}$$ converges almost surely, as $n$ tends to infinity, to the push-forward of the law $\restriction{\mathcal U}{D^+_\C(0,1)}\otimes \restriction{\mathcal U}{\Sp(\Hq)}$ by the map $(z,u)\mapsto uzu^*$.

To prove that this law, that we shall denote by $L$, has a density given by \eqref{26.11.08.2}, it suffices to note that by Proposition \ref{26.11.08.1}, for any Borel bounded function $f$ on $\Hq$, 
\[ 
\int f(t) d L(t)= \f{2}{\pi}\int_{x= -\infty}^{+\infty}\int_{y=0}^{+\infty}\int_{s\in \mathbb{S}(\Hq_0)}1_{x^2+y^2\leq 1}f(x+ys) d s d y d x,
\]
where $\mathbb{S}(\Hq_0)$  denotes the unit sphere of the subspace $\Hq_0$ of $\Hq$ of quaternions with null real part and $ d s$ denotes the uniform \pro measure on this sphere. By spherical integration on the three-dimensional space of quaternion with null real part, we have   
\begin{eqnarray*} 
\int f(t) d L(t)\!\!\!\! 
&=& \!\!\!\!\f{1}{2\pi^2}\int_{x= -\infty}^{+\infty}\int_{y=0}^{+\infty}\int_{s\in \mathbb{S}(\Hq_0)}1_{x^2+y^2\leq 1}y^{-2}f(x+ys)4\pi y^2 d s d y d x\\
&=&\!\!\!\! \f{1}{2\pi^2}\int_{(x_1, \ldots, x_4)\in \R^4 }\!\!\!\! 1_{x_1^2+x_2^2+x_3^2+x_4^2\leq 1}(x_2^2+x_3^3+x_4^2)^{-1}\\
& & \qquad \qquad \qquad \qquad \times f(x_1+x_2\iq+x_3\jq+x_4\kq) d x_1 d x_2 d x_3 d x_4,
\end{eqnarray*}
which proves the theorem.
\end{proof}

 \vskip 1.5cm

\section{Proof of Theorem \ref{nonquadpotthm}}  

 \vskip 0.5cm

  Recall that $\restriction{\mathcal U}{\Sph(\Hq)}$ denotes the uniform measure on the sphere $\Sph(\Hq)$.   Theorem \ref{nonquadpotthm} will be proved as follows. We first identify the limit measure $\nu$ of the empirical distribution $\frac{1}{2n}\sum_{i=1}^{2n}\delta_{z_{n,i}}$ such that the measure 
  $\frac{1}{2n}\sum_{i=1}^{2n} \delta_{u_iz_{n,i}u_i^*}$ converges to  $\restriction{\mathcal U}{\Sph(\Hq)}$ for $(u_i)_{i\geq1}$ a sequence of i.i.d. $\restriction{\mathcal U}{\Sph(\Hq)}$-distributed random variables independent of the $z_{n,i}$'s. Using the characterization of the equilibrium measure given by Theorem \ref{caracpot}, we next show by contradiction that there is no quadratic potential $V$ such that $\nu$ is the equilibrium measure of $V$. It follows directly that there is no $V$ \st   for $(z_{n,1},\ldots,z_{n,n})$ distributed according to $P_n^V$, $\frac{1}{2n}\sum_{i=1}^{2n}\delta_{z_{n,i}}$ tends to $\nu$.\\

\noindent{\bf Claim: }for any sequence $(z_{n,1}, \ldots, z_{n,2n})\in \C^{2n}$ \st $z_{n,n+i}=\overline{z}_{n,i}$ and which is independent of the $u_j$'s, we have 
\[
\frac{1}{2n}\sum_{i=1}^{2n} \delta_{u_iz_{n,i}u_i^*}\underset{n\to\infty}{\longrightarrow} \restriction{\mathcal U}{\Sph(\Hq)} \text{ a.s.}\iff
\frac{1}{2n}\sum_{i=1}^{2n} \delta_{z_{n,i}}\underset{n\to\infty}{\longrightarrow} d\nu(z):=2|\Im(z)|^2 d\sigma(z) \text{ a.s.},
\]
where $\sigma$ is the Haar probability measure on the sphere $\Sph^1$.

Let us prove this claim. 
By Lemma \ref{empirique+tensor.25.11.08}, it suffices to prove that if $t$ is a complex random variable distributed according to $d\nu(z)=2|\Im(z)|^2 d\sigma(z)$ independent of a $\Sph(\Hq)$-uniformly distributed variable $u$,   $utu^*$ is uniformly distributed on $\Sph(\Hq)$.

By Proposition \ref{26.11.08.1}, for such random variables  $t$ and   $u$, $$utu^*\overset{(d)}{=}\Re(utu^*)+|\Im(utu^*)|S,$$for  $S$ uniformly distributed on the unit sphere of $\Hq_0=\{q\in \Hq \ste \Re(q)=0\}$ and   independent of $utu^*$. Thus since $|utu^*|=1$, we have $|\Im(utu^*)|=\sqrt{1-\Re(utu^*)^2}$. Moreover,  $\Re(utu^*)=\Re(t)$ (by  \cite[Th. 2.1.6]{z97} and because $t\in \C$),  so that  $$utu^*\overset{(d)}{=}\Re(t)+\sqrt{1-\Re(t)^2}S.$$Of course, the same applies to $u$:  one also has $$u \overset{(d)}{=}\Re(u )+\sqrt{1-\Re(u)^2}S,$$for  $S$ uniformly distributed on the unit sphere of $\Hq_0=\{q\in \Hq \ste \Re(q)=0\}$ and is independent of $u$.

So to prove the claim, it suffices to prove that $$\Re(t)  \overset{(d)}{=}\Re(u ),$$ which can easily be verified (the common distribution of $\Re(t)  $ and $\Re(u )$  is the semi-circular law on $[-1,1]$). The claim is proved.

The logarithmic potential of $\nu$ is given, for $x\in\C$ by
\[
U^{\nu}(x)=\int_{\Sph^1}\log|x-z|^{-1} d\nu(z)=
\begin{cases}
-\frac{1}{4}(\Re(x)^2-\Im(x)^2), & \text{if $|x|\leq 1$},\\
-\frac{1}{4|x|^4}(\Re(x)^2-\Im(x)^2)-\log |x|, & \text{if $|x|>1$}.
\end{cases}
\]
The detailed calculation of this fact is given in the appendix.

Let $z=x+iy\in\C$. Suppose by contradiction that there exists a quadratic potential $V(z)=a_1z^2+a_2\bar z^2+a_3z\bar z$, such that $\nu$ is the equilibrium measure of $V$. 
Since $V$ must be real valued, we have $a_1=a+ib=\bar a_2$, and $a_3=c\in\R$. So $V$ can be written $V(z)=x^2(a+c)+y^2(-a+c)+xy(-2b+c)$, for $z=x+iy$. 
By Theorem \ref{caracpot}, $2U^\nu+V$ must be constant on the support of $\nu$. So, we get, for $|z|=1$, $2U^\nu(z)+V(z)=(a+c-\frac{1}{2})x^2+(-a+c+\frac{1}{2})y^2+xy(-2b+c)=\text{constant}$. This implies that $a=\frac{1}{2}$, and $c=2b>0$. Hence,
\[
2U^\nu(z)+V(z)=c|z|^2=c\quad\text{on $|z|=1$}.
\]
But, on $|z|<1$, we have $2U^\nu(z)+V(z)<c$, which refutes the condition (ii) of Theorem \ref{caracpot}. So, we obtain a contradiction, and there does not exist a quadratic potential $V$ such that $\nu$ is the equilibrium measure of $V$.\hfill$\square$

 \vskip 1.5cm

\section{Appendix}

  \vskip 0.5cm
 
In this appendix, we give the calculation of the logarithmic potential of the measure $ d\nu(z)=2|\Im(z)|^2 d\sigma(z)$ used in the proof of Theorem \ref{nonquadpotthm} (so $d\sigma$ is the Haar \pro measure on the unit sphere $\Sp^1$). We begin with two lemmas.

\begin{lem}
We have, for $r\geq0$,
\[
\int_{-\pi}^{\pi} \log |r-e^{i\theta}| \frac{1}{\pi}\sin^2(\theta) d\theta=
\begin{cases}
\frac{r^2}{4},& \text{if $r<1$,} \\
\frac{1}{4r^2}+\log r,& \text{if $r>1$}.
\end{cases}
\] 
\end{lem}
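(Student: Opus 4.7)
The plan is to reduce the integral to Fourier orthogonality. The key observation is that $\frac{1}{\pi}\sin^2(\theta)=\frac{1}{2\pi}-\frac{\cos(2\theta)}{2\pi}$ has only two nonzero Fourier modes, so upon expanding $\log|r-e^{i\theta}|$ in a cosine series in $\theta$, the integral will pick up only the constant term and the coefficient of $\cos(2\theta)$.

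For $r<1$: I would write $|r-e^{i\theta}|=|1-re^{-i\theta}|$ and use the Mercator series $\log(1-w)=-\sum_{k\geq 1} w^k/k$ for $|w|<1$, then take real parts to get the uniformly convergent expansion
$$\log|r-e^{i\theta}|=-\sum_{k\geq 1}\frac{r^k\cos(k\theta)}{k}.$$
Uniform convergence in $\theta$ (for fixed $r<1$) legitimizes term-by-term integration against $\frac{1}{\pi}\sin^2(\theta)$, and orthogonality of cosines on $[-\pi,\pi]$ kills every term except $k=2$. A short computation of $\int_{-\pi}^{\pi}\cos(2\theta)\cos(k\theta)\,d\theta$ shows that the surviving $k=2$ contribution is exactly $r^2/4$.

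For $r>1$: I would factor out the dominant term as $|r-e^{i\theta}|=r\,|1-e^{i\theta}/r|$. The $\log r$ piece integrates to $\log r$ since $\frac{1}{\pi}\sin^2(\theta)$ has total mass $1$ on $[-\pi,\pi]$. For the factor $\log|1-e^{i\theta}/r|$, I apply the same Fourier expansion with $1/r<1$ in place of $r$ and with $e^{i\theta}$ in place of $e^{-i\theta}$ (this only changes the sign in the exponential, not the real part), yielding $1/(4r^2)$. Summing gives $\log r+1/(4r^2)$.

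There is essentially no main obstacle here: the argument is a direct application of Fourier orthogonality. The only technical point is the interchange of sum and integral, and this is justified by the geometric uniform convergence of the cosine series for any $r\leq 1-\varepsilon$ or $r\geq 1+\varepsilon$; the boundary value $r=1$ is not covered by the lemma, so no limiting argument is needed.
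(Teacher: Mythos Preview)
Your argument is correct and takes a genuinely different route from the paper. The paper proceeds by integration by parts: writing $I=\frac{1}{2}\int_{-\pi}^{\pi}\log(r^2+1-2r\cos\theta)\sin^2\theta\,d\theta$, it integrates by parts once to obtain a relation between $I$ and the two auxiliary integrals $A=\int_{-\pi}^{\pi}\log(r^2+1-2r\cos\theta)\cos\theta\,d\theta$ and $B=\int_{-\pi}^{\pi}\log(r^2+1-2r\cos\theta)\,d\theta$; then $B$ is handled by the mean-value lemma, while $A$ requires a second integration by parts and an appeal to the Fourier coefficients of the Poisson kernel. Your approach short-circuits all of this by writing $\frac{1}{\pi}\sin^2\theta=\frac{1}{2\pi}-\frac{1}{2\pi}\cos 2\theta$ and using the Taylor expansion $\log|1-w|=-\sum_{k\ge 1}\Re(w^k)/k$ directly, so that orthogonality of cosines isolates the $k=2$ term in one step. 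The two methods are morally related---the Poisson-kernel identities the paper invokes are themselves equivalent to the Fourier expansion you use---but your packaging is cleaner and avoids the intermediate integrals entirely. The paper's approach, on the other hand, makes the connection to the earlier mean-value lemma explicit and does not require justifying a series expansion. Both are complete for $r\ne 1$.
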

\begin{proof}
Let $I=\int_{-\pi}^{\pi} \log |r-e^{i\theta}| \sin^2(\theta) d\theta=\frac{1}{2}\int_{-\pi}^{\pi}\log(r^2+1-2r\cos\theta)\sin^2(\theta) d\theta$. A first integration by parts gives
\[
I=-\frac{r^2+1}{4r}\!\!\!\underbrace{\int_{-\pi}^{\pi}\!\!\!\!\!\log(r^2+1-2r\cos\theta)\cos\theta d\theta}_{:=A}+\frac{1}{2}\!\!\underbrace{\int_{-\pi}^{\pi}\!\!\!\!\!\log(r^2+1-2r\cos\theta) d\theta}_{:=B}-I-\frac{\pi}{2}.
\]
By Lemma \ref{meanclassical}, we have
\[
B=2\int_{-\pi}^{\pi}\log|r-e^{i\theta}| d\theta=
\begin{cases}
0, & \text{if $r<1$,}\\
4\pi\log r, & \text{if $r>1$}.
\end{cases}
\]
Now let us calculate $A$. Integration by parts gives,
\[
A=-2r\underbrace{\int_{-\pi}^{\pi}\frac{1}{r^2+1-2r\cos\theta} d\theta}_{:=A_1}+2r\underbrace{\int_{-\pi}^{\pi}\frac{\cos^2\theta}{r^2+1-2r\cos\theta} d\theta}_{:=A_2}.
\]
Let $P_a(\theta)=\frac{1-a^2}{a^2+1-2a\cos\theta}$ be the Poisson kernel, for $0\leq a<1$. Recall   that
\[
\frac{1}{2\pi}\int_{-\pi}^{\pi}P_a(t-\theta)e^{in\theta} d\theta=a^{|n|}e^{int}, \qquad \text{ for } n\in\Z.
\]
Thus, we obtain
\[
A_1=
\begin{cases}
\frac{2\pi}{1-r^2}, & \text{for $r<1$,}\\
\frac{2\pi}{r^2-1}, & \text{for $r>1$},
\end{cases}
\]
the second case being obtained by replacing $r$ by $\frac{1}{r}$.
In the same way,  
\[
A_2=
\begin{cases}
\frac{1}{1-r^2}\pi(r^2+1), & \text{for $r<1$},\\
\frac{1}{r^2-1}\pi(\frac{1}{r^2}+1), & \text{for $r>1$},
\end{cases}
\] which allows to conclude.\end{proof}

To determine the logarithmic potential for points belonging to the unit circle, we will need the following lemma, whose proof is straightforward using change of variables and mean value property of harmonic functions.

\begin{lem}
We have
\[
\int_{-\pi}^{\pi}\log|1-e^{i\theta}|\frac{1}{\pi}\sin^2\theta d\theta=\frac{1}{4}.
\]
\end{lem}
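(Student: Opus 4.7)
The strategy is to obtain this identity as the $r \to 1$ limit of the preceding lemma in the appendix. For $r \neq 1$, that lemma gives $\int_{-\pi}^{\pi} \log|r - e^{i\theta}|\,\tfrac{1}{\pi}\sin^2\theta\,d\theta$ equal to $r^2/4$ (if $r<1$) or $1/(4r^2) + \log r$ (if $r>1$); both expressions tend to $1/4$ as $r \to 1$. Hence it suffices to show that the left-hand side is continuous at $r=1$, i.e.\ that one may pass to the limit under the integral sign.

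To justify the interchange I would invoke dominated convergence. The key observation is the elementary bound $|r-e^{i\theta}|^2 = (r-\cos\theta)^2 + \sin^2\theta \geq \sin^2\theta$, which gives $-\log|r-e^{i\theta}| \leq -\log|\sin\theta|$. Combined with the trivial upper bound $\log|r-e^{i\theta}| \leq \log(r+1) \leq \log 3$, valid for $r \in [0,2]$, we obtain
\[
\bigl|\log|r-e^{i\theta}|\bigr|\,\sin^2\theta \;\leq\; (\log 3)\sin^2\theta + \bigl|\log|\sin\theta|\bigr|\sin^2\theta,
\]
and the right-hand side is integrable on $[-\pi,\pi]$ (near $\theta = 0$ the second summand behaves like $\theta^2|\log\theta|$, which is bounded). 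Dominated convergence then gives continuity of the integral at $r=1$, completing the proof.

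The main (essentially only) obstacle is the justification of this continuity; everything else is immediate from the preceding lemma. As an alternative route that avoids the limit altogether, one may write $\sin^2\theta = (1-\cos 2\theta)/2$ and split the integral. Lemma \ref{meanclassical} applied at $|x| = r = 1$ gives $\int_{-\pi}^{\pi}\log|1-e^{i\theta}|\,d\theta = 0$, which kills the first summand; the second summand $-\tfrac{1}{2}\int_{-\pi}^{\pi}\log|1-e^{i\theta}|\cos(2\theta)\,d\theta$ can then be evaluated via the (term-wise integrable) Fourier expansion $\log|1-e^{i\theta}| = -\sum_{n\geq 1}\cos(n\theta)/n$, yielding $\pi/4$ for the full integral.
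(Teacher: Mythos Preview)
Your argument is correct and takes a different route from the paper's. The paper does not actually give a proof of this lemma, only the hint ``straightforward using change of variables and mean value property of harmonic functions,'' which suggests a direct computation at $r=1$ somewhat along the lines of your alternative route (splitting via $\sin^2\theta=(1-\cos 2\theta)/2$ and invoking Lemma~\ref{meanclassical}), though with a change of variables in place of the Fourier expansion. Your primary approach instead obtains the $r=1$ value as the limit of the preceding lemma, justified by dominated convergence through the neat pointwise bound $|r-e^{i\theta}|\ge|\sin\theta|$; the resulting majorant $(\log 3)\sin^2\theta+|\log|\sin\theta||\sin^2\theta$ is indeed integrable. This is arguably the more economical way to proceed: no new computation is needed, and the value at $r=1$ emerges as the continuous extension of the formula already proved for $r\ne 1$, whereas the paper's hinted argument treats $r=1$ as a separate computation.
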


By the two previous lemmas,  
\begin{equation} \label{Vpot-sin}
\int_{-\pi}^{\pi}\log|r-e^{i\theta}|\frac{1}{\pi}\sin^2\theta d\theta=
\begin{cases}
\frac{r^2}{4}, & \text{if $0\leq r\leq1$},\\
\frac{1}{4r^2}+\log r, & \text{if $r>1$}.
\end{cases}
\end{equation}

The logarithmic potential of the measure $ d\nu(z)=2|\Im z|^2 d\sigma(z)$ on $\Sph^1$ can now be determined.
\begin{prop}
Let $ d\nu(z)=2|\Im z|^2 d\sigma(z)$. The logarithmic potential of $\nu$ is given, for $x\in\C$,  by
\[
U^{\nu}(x)=\int_{\Sph^1}\log|x-z|^{-1} d\nu(z)=
\begin{cases}
-\frac{1}{4}(\Re(x)^2-\Im(x)^2), & \text{if $|x|\leq 1$},\\
-\frac{1}{4|x|^4}(\Re(x)^2-\Im(x)^2)-\log |x|^2, & \text{if $|x|>1$}.
\end{cases}
\]
\end{prop}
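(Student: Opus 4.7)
The plan is to parametrize $z = e^{i\theta}$ on $\Sph^1$, so that $d\nu(z) = \frac{1}{\pi}\sin^2\theta\, d\theta$, and write $x = re^{i\alpha}$ with $r = |x|$. Since $|x - e^{i\theta}| = |r - e^{i(\theta-\alpha)}|$, the change of variable $\phi = \theta - \alpha$ gives
\[
U^{\nu}(x) = -\frac{1}{\pi}\int_{-\pi}^{\pi}\log|r - e^{i\phi}|\,\sin^2(\phi+\alpha)\,d\phi.
\]
The entire angular dependence of $x$ is now carried by the weight $\sin^2(\phi+\alpha)$, while the remaining radial kernel is exactly the one appearing in \eqref{Vpot-sin}.

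Next I would expand
\[
\sin^2(\phi+\alpha) = \tfrac{1}{2} - \tfrac{\cos(2\alpha)}{2}\cos(2\phi) + \tfrac{\sin(2\alpha)}{2}\sin(2\phi).
\]
The contribution from the $\sin(2\phi)$ term vanishes by parity, since $\log|r-e^{i\phi}|$ is even in $\phi$. This reduces the potential to
\[
U^{\nu}(x) = -\tfrac{1}{2}M(r) + \tfrac{\cos(2\alpha)}{2}\,C(r),
\]
where $M(r) := \frac{1}{\pi}\int_{-\pi}^{\pi}\log|r-e^{i\phi}|\,d\phi$ and $C(r) := \frac{1}{\pi}\int_{-\pi}^{\pi}\log|r-e^{i\phi}|\cos(2\phi)\,d\phi$. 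Lemma~\ref{meanclassical} directly delivers $M(r) = 0$ for $r \le 1$ and $M(r) = 2\log r$ for $r > 1$.

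To compute $C(r)$ I would avoid any new calculation: using $\sin^2\phi = (1-\cos(2\phi))/2$, the left-hand side of \eqref{Vpot-sin} equals $\tfrac{1}{2}M(r) - \tfrac{1}{2}C(r)$, so $C(r) = M(r) - 2J(r)$ where $J(r)$ denotes the right-hand side of \eqref{Vpot-sin}. This yields $C(r) = -r^2/2$ for $r \le 1$ and $C(r) = -1/(2r^2)$ for $r > 1$. Substituting back, and identifying $r^2\cos(2\alpha) = \Re(x)^2 - \Im(x)^2$ (and $\cos(2\alpha)/r^2 = (\Re(x)^2-\Im(x)^2)/|x|^4$ when $r > 1$), produces the two stated formulas.

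No genuine obstacle is expected: the argument reduces the whole proposition to the two radial integrals already established in the appendix together with an elementary parity observation. The only point to be careful about is to extract $C(r)$ from \eqref{Vpot-sin} \emph{before} reintroducing the factor $\cos(2\alpha)$, so as to exploit the radial identity already at hand rather than recompute anything from scratch.
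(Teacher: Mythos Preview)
Your argument is correct and follows essentially the same route as the paper: rotate the angular variable so that the radial kernel $\log|r-e^{i\phi}|$ is isolated, expand the trigonometric weight, discard the odd part by parity, and then invoke Lemma~\ref{meanclassical} together with \eqref{Vpot-sin}. The only cosmetic difference is that the paper expands $\sin^2(\theta+\gamma)$ via the addition formula into $\sin^2\theta\cos^2\gamma+\cos^2\theta\sin^2\gamma+2\sin\theta\cos\theta\sin\gamma\cos\gamma$, whereas you pass through the double-angle form and extract $C(r)$ directly from \eqref{Vpot-sin}; the inputs and the logic are identical.
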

\begin{proof}
Let $x=re^{i\gamma}\in\C$. 
\[
\int_{\Sph^1} \log|re^{i\gamma}-z| d\nu(z)=\int\log|r-z|\frac{1}{\pi}|\Im(ze^{i\gamma})|^2 d\sigma(z),
\]
since $ d\sigma$ is translation-invariant. It follows that
\beq 
&&\int_{\Sph^1} \log|re^{i\gamma}-z| d\nu(z)=\\&&
\int_{-\pi}^{\pi}\log|r-e^{i\theta}|\frac{1}{\pi}\sin^2\theta d\theta \cos^2\gamma+\int_{-\pi}^{\pi}\log|r-e^{i\theta}|\frac{1}{\pi}\cos^2\theta d\theta\sin^2\gamma\\
&& \qquad\qquad\qquad\qquad+\int_{-\pi}^{\pi}\log|r-e^{i\theta}|\frac{1}{\pi}\sin\theta\cos\theta d\theta 2\sin\gamma\cos\gamma.
\eeq
The last integral is zero by parity, so
the result follows by Lemma \ref{meanclassical} and the previous calculation (\ref{Vpot-sin}).
\end{proof}



\begin{thebibliography}{Meh04}

\bibitem{alice-greg-ofer}
G.~Anderson, A.~Guionnet, O.~Zeitouni
\newblock \emph{An Introduction to Random Matrices}.
\newblock  Cambridge studies in advanced mathematics, {118} (2009).

\bibitem{bag97} G. Ben Arous, A. Guionnet \emph{Large deviations for Wigner's law and Voiculescu's non-commutative entropy}. Probab. Theory Related Fields 108 (1997), no. 4, 517--542.

\bibitem{baz98} G. Ben Arous, O. Zeitouni \emph{Large deviations from the circular law}. ESAIM Probab. Statist. 2 (1998), 123--134.

  \bibitem{L-BEN} F.~Benaych-Georges, T.~L\'evy
   \emph{A continuous semigroup of notions of independence between the classical and the free one}. Ann. Probab., Vol. 39, no. 3 (2011), 904--938.

\bibitem{Brenner51} J. L. Brenner \emph{Matrices of quaternions}, Pacific J.  Math. 1, 329--335 (1951). 


\bibitem{deift}
P.A. Deift, \emph{Orthogonal polynomials and random matrices: a
  {R}iemann-{H}ilbert approach}, Courant Lecture Notes in Mathematics, vol.~3,
  New York University Courant Institute of Mathematical Sciences, New York,
  1999.

\bibitem{gin}
J.~Ginibre, \emph{{{Statistical Ensembles of Complex, Quaternion, and Real
  Matrices}}}, Journal of Mathematical Physics \textbf{6} (1965), no.~3,
  440--449.

\bibitem{johansson-fluctuations}
K.~Johansson, \emph{On fluctuations of eigenvalues of random {H}ermitian
  matrices}, Duke Math. J. \textbf{91} (1998), no.~1, 151--204.

\bibitem{mehta}
M.L. Mehta, \emph{Random matrices}, third ed., Pure and Applied Mathematics
  (Amsterdam), vol. 142, Elsevier/Academic Press, Amsterdam, 2004.


\bibitem{saff}
E.B. Saff and V.~Totik, \emph{{{Logarithmic Potentials with External Fields}}},
  Springer-Verlag, 1997.
  
  
 \bibitem{tvk10} T. Tao, V. Vu, M. Krishnapur \emph{Random matrices: Universality of ESDs and the circular law}, Annals of Probability 38 (2010), no. 5, 2023--2065. 
  
  \bibitem{W58} Wigner, E. \emph{On the distribution of the roots of certain symmetric matrices} Ann. Math. 67 325-327 (1958)

\bibitem{z97}
F.~Zhang, \emph{{{Quaternions and Matrices of Quaternions}}}, Linear Algebra
  and its Applications \textbf{251} (1997), 21--57.

\end{thebibliography}

\providecommand{\bysame}{\leavevmode\hbox to3em{\hrulefill}\thinspace}
\providecommand{\MR}{\relax\ifhmode\unskip\space\fi MR }
\providecommand{\MRhref}[2]{%
  \href{http://www.ams.org/mathscinet-getitem?mr=#1}{#2}
}
\providecommand{\href}[2]{#2}

\end{document}